\numberwithin{equation}{section}
\newtheorem{thm}{Theorem}[section]
\newtheorem{prop}[thm]{Proposition}
\newtheorem{lem}[thm]{Lemma}
\newtheorem{conj}[thm]{Conjecture}
\newtheorem{statement}[thm]{Statement}
\newtheorem{defn}[thm]{Definition}
\newtheorem{rmk}[thm]{Remark}
\newcommand{\R}{\mathbb R}
\newcommand{\Z}{\mathbb Z}
\newcommand{\A}{\mathcal A}
\newcommand{\C}{\mathbb C}
\newcommand{\HH}{\mathbb H}
\newcommand{\G}{\mathcal G}
\newcommand{\X}{\mathcal X}
\newcommand{\F}{\mathcal F}
\newcommand{\ttt}{\mathbf t}
\newcommand{\f}{\mathbf f}
\newcommand{\g}{\mathbf g}
\begin{document}

\title[Article Title]{A Dynamical Framework for the McKay Correspondence via Gauge-Theoretic Morse Flow}


\author{\fnm{Jiajun} \sur{Yan}}\email{jy156@virginia.edu}

\affil{\orgdiv{Department of Mathematics}, \orgname{Rice University}}


\abstract{The McKay correspondence establishes a bijection between the cohomology of a minimal resolution and the irreducible representations of a finite subgroup $\Gamma \subset \text{SU}(2)$. While traditional proofs rely on static algebraic isomorphisms, we propose a dynamical framework grounded in gauge theory and Morse-Bott theory. We analyze an $S^1$-invariant Morse-Bott function on the minimal resolution, interpreting its gradient flow lines as $1$-parameter families of holonomy representations of flat connections from $\Gamma$ to $GL(R)$. We conjecture that the flow emanating from a critical submanifold converges asymptotically at the boundary to a specific irreducible representation of $\Gamma$. This dynamical process explicitly constructs the identification between the cohomology basis and the irreducible representations of $\Gamma$ prescribed by the McKay correspondence. We prove this conjecture for cyclic cases.}

\keywords{gauge theory, Morse theory, McKay correspondence, representation theory, singularity theory}

\pacs[Acknowledgements]{I would like to thank Jonathon Fisher, Frances Kirwan, Ciprian Manolescu, Josh Mundinger, Steven Rayan, Richard Wentworth and  Graeme Wilkin for the helpful comments they offered on this project. I would also like to specially thank Tom Mark and Jo Nelson for their continued support throughout the project.}

\maketitle

\section{Introduction}

The McKay correspondence is first  stated in the seminal paper \cite{mckay} by McKay where he observes that there is a one-to-one correspondence between the finite subgroups $\Gamma \subset \text{SU}(2)$ and the simply laced Dynkin diagrams of types $A$, $D$, and $E$. McKay observes that decomposing the tensor product of the canonical representation with the irreducible representations of $\Gamma$ yields an adjacency matrix exactly matching that of the affine Dynkin diagram. Initially a mysterious combinatorial coincidence, this  correspondence  inspires decades of subsequent research exploring the deeper conceptual explanation for such an occurance.

This link was given a geometric foundation by Gonzalez-Sprinberg and Verdier \cite{GSV}, who lift the correspondence to the minimal resolution of the quotient singularity $X \to \mathbb{C}^2/\Gamma$. They establish that the tautological bundles on the resolution $X$ provide a $K$-theoretic basis isomorphic to the representation ring $R(\Gamma)$. Geometrically, this identified the nontrivial irreducible representations with a basis of  $H^2(X, \mathbb{Z})$. Furthermore, the intersection form of these cohomology cycles reproduces the negative Cartan matrix, thereby providing a geometric explanation for the appearance of the Dynkin diagram in the representation theory. A heuristic statement of the McKay correspondence is given below; standard references for the relevant basic topics include \cite{fulton-harris}, \cite{griffiths-harris}, and \cite{G-S}.

\begin{statement}[McKay correspondence]
Let $\Gamma \subset SU(2$) be a finite subgroup, and let $\widetilde{ \mathbb{C}^2/\Gamma}$ be the minimal resolution of $\C^2/\Gamma$. There is a natural correspondence between the non-trivial irreducible representations of $\Gamma$ and the irreducible exceptional curves in $\widetilde{ \mathbb{C}^2/\Gamma}$, such that the intersection pairing of these curves is given by the Mckay quiver of  $\Gamma$.

\end{statement}

Subsequently, more proofs of the McKay correspondence and its generalization to higher dimensions utilizing different, mainly algebriac tools and techniques, such as  \cite{B-D}, \cite{BKR}, \cite{D-L}, \cite{I-N}, \cite{I-R}, \cite{kaledin},  appear in the years to come -- the list of references provided here is not nearly exhaustive. We direct the readers to \cite{craw} and \cite{reid} for further expositions.

In the meantime, Kronheimer in his PhD thesis \cite{kronheimer}, \cite{kronheimer2}, gives a complete construction of all the $4$-dimensional hyperkähler ALE spaces, showing that each such space is topologically equivalent to the minimal resolution $X$ of the corresponding quotient $\mathbb{C}^2/\Gamma$. In Kronheimer's construction, these hyperkähler manifolds are obtained via a finite-dimensional hyperkähler reduction. Kronheimer’s construction provided $X$ with a canonical metric structure, moving the correspondence from the realm of algebraic geometry into the domain of gauge theory and metric analysis. In a subsequent paper studying the moduli space of instantons on ALE spaces \cite{kronheimer3}, joint with Nakajima, they provide a geometric interpretation of the McKay correspondence via APS-index theory, further linking the McKay correspondence with hyperkähler geometry and gauge theory. 

More recently, the McKay correspondence is explored through the lens of symplectic and contact geometry in the works of McLean-Ritter \cite{M-R} and Digiosia-Nelson \cite{D-N}. These new perspectives use symplectic cohomology and cylindrical contact homology to establish the correspondence through the enumeration of pseudoholomorphic curves. One thing to note is that the symplectic and contact geometric interpretation of the McKay correspondence does not directly address irreducible representations of $\Gamma$, and the correspondence is established between nontrivial conjugacy classes of $\Gamma$ and the generators of $H^2(X, \mathbb{Z})$.

The remarkable variety of tools, ranging from $K$-theory, Hilbert schemes, to symplectic and contact geometry, that have been used to study the McKay correspondence indicates a rich and deep link between representation theory, singularity theory and geometry. The strategy of all the existing approaches so far is to construct an isomorphism identifying bases (cohomology cycles to irreducible representations). While these celebrated results establish that an isomorphism exists, the nature of the correspondence remains largely static. A natural question, particularly in light of modern developments in gauge theory, Floer theory and mathematical physics, is whether this correspondence can be realized dynamically. Given the asymptotic metric structure of the ALE spaces constructed by Kronheimer \cite{kronheimer}, one might ask if the correspondence can manifest through the evolution of gradient flow trajectories of a natural Morse-Bott function as the flow lines approach the boundary at infinity, bearing certain reminiscence to the construction of instanton Floery homology \cite{floer}.

In this paper, we propose such a dynamical framework. The foundation of the framework lies upon a gauge-theoretic interpretation of Kronheimer's construction of the $4$-dimensional hyperkähler ALE spaces, developed in the thesis work  of the author \cite{yan}, in which each ALE space is realized as a moduli space of solutions to a system of equations for a pair consisting of a connection and a section of a vector bundle over an orbifold Riemann surface, modulo a gauge group action. We use this gauge-theoretic construction to seek a new gauge/Morse-theoretic interpretation of the McKay correspondence by studying an $S^1$-invariant Morse-Bott function on the ALE spaces and matching the gradient flow lines from its critical points with the irreducible representations of $\Gamma$. 

The setup is as follows: inspired by works of Atiyah-Bott \cite{atiyah-bott}, Hitchin \cite{hitchin} and Kirwan \cite{kirwan}, we study the topology of the moduli space via an  $S^1$-invariant Morse-Bott function induced by a Hamiltoninan $S^1$-action on the moduli space. Following Kronheimer's notations in \cite{kronheimer},  a point in the minimal resolution $X$ can be expressed as a pair of matrices $(\alpha,\beta)$ with certain properties, with $\alpha,\beta \in End(R)$ and $R$ being the regular representation of $\Gamma$. The $S^1$-action on $X$ is simply given by $$e^{i\varphi}\cdot(\alpha,\beta)=(e^{i\varphi}\alpha,e^{i\varphi}\beta).$$ The induced $S^1$-invariant Morse-Bott function is  equal to the norm square of $(\alpha,\beta)$ up to some constant, that is, $$\Phi(\alpha,\beta)=k_0(\|\alpha\|^2+\|\beta\|^2),$$ for some fixed constant $k_0$. The critical points of this $S^1$-invariant Morse-Bott function are precisely the $S^1$-fixed point of the Hamiltoninan $S^1$-action. On the other hand, from the gauge-theoretic construction in \cite{yan}, we realize each point $(\alpha,\beta)$ of an ALE space as a flat connection on a trivial bundle over $S^3/\Gamma$ with fibers equal to $R$, the regular representation of $\Gamma$. This yields a holonomy representation $\rho_{(\alpha,\beta)}: \Gamma \to GL(R)$, which can be shown to be always isomorphic to $R$. More specifically, the flat connection is given by $$d + \alpha dz_1 + \beta dz_2,$$ where $z_1,z_2$ are coordinates in $\C^2$. We set $[\alpha, \beta] = 0$ to ensure that the above connection is flat, with holonomy representation given by $$\rho_{(\alpha,\beta)}(\gamma)=R(\gamma)\exp((v_1-(uv_1-\bar{v}v_2))\alpha+(v_2-(\bar{u}v_2+vv_1))\beta),$$ where $(v_1,v_2)$ is the base point in $S^3/\Gamma$, and $u,v$ are the coordinates of an element $\gamma\in\Gamma$. We show that $\rho_{(\alpha,\beta)}$ is always isomorphic to the regular representation $R$, for any point $(\alpha,\beta)$ in $X$. 

Along a gradient flow line $(\alpha(t),\beta(t))$ from an index-$2$ critical point to infinity, we obtain a  $1$-parameter family of representations $\rho(t) = \rho_{(\alpha(t),\beta(t))}$. Since $\rho(t)$ is always isomorphic to the regular representation $R$, we obtain a $1$-parameter family of change of basis matrices $P(t)$ such that $\rho(t) = P(t) R P(t)^{-1}$. As $H^2(X)$ is generated by the components of the $S^1$-fixed points, we propose the following conjecture, presenting an identification  of the generators of $H^2(X)$ with the nontrivial irreducible representations of $\Gamma$ via the evolution of gradient flow lines, creating an explicit dynamical framework for the McKay correspondence.

\begin{conj}\label{conj}
Let $\Gamma$ be a finite subgroup of $SU(2)$. Let $X$ be the minimal resolution of $\C^2/\Gamma$.  Let $S_1$,...$S_r$ be the components of $S^1$-fixed points of $X$ generating $H^2(X)$, and let $\rho_j(t)=P_j(t)RP_j(t)^{-1}, 1\leq j\leq r$ be the $1$-parameter family of holonomy representations with the property that $$\rho_j(t)=\rho_{(\alpha_j(t),\beta_j(t))},$$ $$\lim_{t\to-\infty}(\alpha_j(t),\beta_j(t))=(\alpha_j,\beta_j),$$ with $(\alpha_j,\beta_j)\in S_j$. Then $$P_j^{\lim}=\lim_{t\to\infty}P_j(t)$$ is equal to the projector onto the nontrivial irreducible representation $\rho_j$ of $\Gamma$, and every nontrivial irreducible representation of $\Gamma$ arises this way. 
\end{conj}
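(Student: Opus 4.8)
The plan is to establish Conjecture~\ref{conj} for $\Gamma=\Z/n\Z$ by making every object in the statement completely explicit, exploiting the fact that every irreducible representation of an abelian group is one-dimensional. Decompose $R=\bigoplus_{k=0}^{n-1}\rho_k$, where $\rho_k(\gamma_0)=\zeta^k$ with $\zeta=e^{2\pi i/n}$ and $\gamma_0$ a generator. The $\Gamma$-equivariance built into Kronheimer's data forces $\alpha$ to raise the $\Z/n$-grading by one and $\beta$ to lower it by one, so $(\alpha,\beta)$ is recorded by two scalar sequences $(a_k),(b_k)$, and the flatness relation $[\alpha,\beta]=0$ together with the real moment-map equation reduces to an explicit scalar system. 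I would then solve the $S^1$-fixed-point condition: a configuration is fixed iff the scaling $e^{i\varphi}$ can be absorbed by a diagonal gauge transformation $g=\mathrm{diag}(g_0,\dots,g_{n-1})$, which rescales $a_k$ by $g_{k+1}/g_k$ and $b_k$ by $g_{k-1}/g_k$; this pins $(\alpha,\beta)$ to a nilpotent ``arc'' configuration and produces exactly $n-1$ components $S_1,\dots,S_{n-1}$, matching the exceptional $\mathbb{P}^1$'s and confirming $r=n-1$.

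Next I would analyze the gradient flow. Since $\Phi$ is (a constant multiple of) the moment map for the scaling $S^1$-action and the metric on $X$ is Kähler, $\mathrm{grad}\,\Phi=J\xi$ where $\xi$ generates the $S^1$-action, so the upward trajectories are the orbits of the complexified scaling $\C^*$-action modulo gauge. In the abelian model this decouples into scalar ODEs of logistic/Riccati type, integrable in closed form. The unstable direction leaving $S_j$ singles out precisely one consecutive arc of entries, which grows like $e^{ct}$ while the complementary entries decay, so the asymptotics of $(\alpha_j(t),\beta_j(t))$ as $t\to\infty$ are completely explicit.

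I would then feed the flow into the holonomy formula $\rho_{(\alpha,\beta)}(\gamma)=R(\gamma)\exp(\cdots)$. Along the trajectory the exponent has the form $s(t)\,N_j$, where $N_j$ is the nilpotent shift operator carried by the growing arc and $s(t)\to\infty$. Because $N_j$ is a single nilpotent block, $\exp(s N_j)$ is dominated by its top power $s^{m}N_j^{m}/m!$, a rank-one operator whose image is the $\rho_j$-line sitting at the top of the arc, and this line is independent of the base point $(v_1,v_2)$, which only rescales the scalar $s(t)$. Conjugating $\rho(t)=R(\gamma)\exp(s(t)N_j)$ back to $R$ then forces $P_j(t)$ to concentrate, and a direct computation should give $\lim_{t\to\infty}P_j(t)=\pi_j$, the isotypic projector onto $\rho_j$. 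Letting $j$ range over the arcs and reading off the top weight of each arc yields each nontrivial irreducible exactly once, giving the asserted bijection.

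The hard part will be the rigorous asymptotic control in the last step. Three subtleties must be handled: first, one must prove that $P_j(t)$ genuinely converges to the projector rather than merely aligning with its image up to an unbounded scalar, which requires uniform estimates on the subdominant entries extracted from the explicit integrated flow; second, base-point independence must be established by showing that $(v_1,v_2)$ enters only through the diverging scalar $s(t)$ and through the invertible residual gauge freedom, so that it drops out of $\pi_j$; and third, one must confirm that the Hessian's unstable direction at $S_j$ activates exactly the arc whose top weight is $\rho_j$, i.e.\ that the Morse-theoretic labeling of the critical components agrees with the representation-theoretic one. I expect the explicit logistic/Riccati solution of the flow to be the essential ingredient that makes all three estimates uniform.
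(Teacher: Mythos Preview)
Your overall architecture---decompose $R$ into one-dimensional weight spaces, identify the $S^1$-fixed points as quiver configurations with a broken edge, integrate the gradient flow explicitly, and read off the limiting intertwiner---matches the paper's strategy. The paper, however, only carries out $\Z_2$ and $\Z_3$ in full and declares the general $A_n$ case for later work, so your plan is strictly more ambitious.

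There is a genuine gap in your asymptotic model. You assert that along the upward trajectory ``the exponent has the form $s(t)\,N_j$, where $N_j$ is the nilpotent shift operator carried by the growing arc'' and that ``the complementary entries decay.'' This is not what happens. At an index-$2$ fixed point the quiver has exactly one vanishing edge (say $\alpha_n=0$), and the \emph{upward} flow turns that missing edge \emph{on}: in the paper's $\Z_3$ example the flow from $x_1=((a,\sqrt{a^2+b^2},0),(0,0,0))$ is
\[
(\alpha(t),\beta(t))=\bigl((\sqrt{e^{c(t)}+a^2},\,\sqrt{e^{c(t)}+a^2+b^2},\,\sqrt{e^{c(t)}}\,),(0,0,0)\bigr),
\]
so every $\alpha$-entry grows and they become asymptotically equal. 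The matrix $N(t)$ in the exponent therefore satisfies $N(t)^n=\lambda(t)^n I$ with $\lambda(t)\to\infty$, i.e.\ it is asymptotically a scalar multiple of the cyclic permutation matrix, \emph{not} a nilpotent block. Your rank-one argument via ``top power $s^{m}N_j^{m}/m!$'' collapses because $N_j^n\neq 0$.

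The paper's mechanism for producing the rank-one limit is different and worth internalizing. Because $N(t)^n$ is scalar, $\exp\bigl((1-\omega)N(t)\bigr)$ expands in generalized hyperbolic functions $g_k(\eta)=\tfrac{1}{n}\sum_{\ell}\omega^{-k\ell}e^{\omega^{\ell}\eta}$; and because $R(\gamma_1)N=\omega\,N R(\gamma_1)$, one can \emph{solve} for the intertwiner in closed form as $P(t)=\exp(\omega N(t))$. As $t\to\infty$ the coefficients $d_k(\lambda)$ are all dominated by the same exponential $e^{\lambda}$, while $N(t)/\lambda(t)$ tends to the cyclic permutation; after renormalization $P^{\lim}\propto I+\omega\,\mathrm{cyc}+\omega^{2}\,\mathrm{cyc}^{2}+\cdots$, which is exactly the character-sum projector onto one irreducible line. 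So the rank-one projector emerges from the DFT/character formula applied to the limiting cyclic matrix, not from nilpotent truncation. Your three ``hard subtleties'' are the right ones to worry about, but the analytic input that resolves them is this $g_k$-expansion, not the dominance of a top nilpotent power.
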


In Section 6, we provide detailed verification of Conjecture \ref{conj} in low rank cyclic cases $\Z_2$, $\Z_3$. The method is indicative of the general verification for all $A_n$-type ALE spaces, which will be contained in a subsequent version of this paper. The verification of Conjecture \ref{conj} for nonabelian $\Gamma$ requires more conceptual inputs as the computation for $S^1$-fixed points becomes cumbersome in those cases. A tentative approach for future work is to realize a gradient flow line as a connection on the ALE space or on $(S^3/\Gamma) \times \mathbb{R}$ in temporal gauge, in light of the geometric interpretation of the McKay correspondence via APS-index theory \cite{kronheimer3} and the construction of instanton Floer theory \cite{floer}.

Another direction for future work is to compare this framework constructed from gauge theory and Morse theory to the symplectic and contact geometric interpretation given in \cite{M-R} and \cite{D-N}.  The robust feature of the geometry present in the McKay correspondence might potentially be an orifice to exploring structural parallel between  symplectic field theory and  gauge theory, echoing a long lasting endeavor to relate the two subfields. 

The organization of the paper is the following. Section 2 reviews Kronoheimer's construction of $4$-dimensional hyperkähler ALE spaces as finite-dimensional hyperkähler quotients and the gauge-theoretic interpretation of Kronoheimer's construction. Section 3 and Section 4 develop the main setup for this paper where we construct a flat connection from any point of an ALE space and calculate its holonomy representation. Section 5 formally introduces the $S^1$-action and  the Morse-Bott function, and analyzes the properties of the Morse-Bott function and its critical points  in detail. Section 6 proves Conjecture \ref{conj} for cyclic cases. 

\section{A review of a gauge-theoretic construction of the $4$-dimensional hyperkähler ALE spaces}

In this section, we review Kronheimer's construction \cite{kronheimer} and a parallel gauge-theoretic construction \cite{yan} of the $4$-dimensional hyperkähler ALE spaces as moduli spaces of solutions to certain gauge-theoretic equations, which will be of crucial importance to the rest of the paper. The relevance of the $4$-dimensional hyperkähler ALE spaces is such that the underlying topological spaces are precisely the minimal resolutions of $\C^2/\Gamma$ for each finite subgroup $\Gamma$ of $SU(2)$.

\begin{subsection}{Kronheimer's construction}
In this section, we give a brief review of Peter Kronheimer's original construction of ALE spaces from which the gauge-theoretic construction takes inspiration. For an introduction to moment maps and symplectic reduction, see \cite{da-silva}.

Let $\Gamma$ be a finite subgroup of $SU(2)$ and let $R$ be its regular representation. Let $Q\cong\C^2$ be the canonical $2$-dimensional representation of $SU(2)$ and let $P=Q\otimes End(R)$, where $End(R)$ denote the endomorphism space of $R$. Let $M=P^\Gamma$ be the space of $\Gamma$-invariant elements in $P$. After fixing a $\Gamma$-invariant hermitian metric on $R$, $P$ and $M$ can be regarded as right $\HH$-modules. Now, choose an orthonormal basis on $Q$, then we can write an element in $P$ as a pair of matrices $(\alpha,\beta)$ with $\alpha,\beta\in End(R)$, and the action of $J$ on $P$ is given by $$J(\alpha,\beta)=(-\beta^*,\alpha^*).$$ Since the action of $\Gamma$ on $P$ is $\HH$-linear, the subspace $M$ is then an $\HH$-submodule, which can be regarded as a flat hyperkähler manifold. Explicitly, a pair $(\alpha,\beta)$ is in $M$ if for each $$\gamma=\begin{pmatrix}
u & v \\
-v^* & u^*  
\end{pmatrix},$$ where $v^*$ and $u^*$ denote the complex conjugate of $v$ and $u$, respectively, we have 
\begin{equation}\label{1}R(\gamma^{-1})\alpha R(\gamma)=u\alpha+v\beta, \end{equation}  \begin{equation}\label{22}R(\gamma^{-1})\beta R(\gamma)=-v^*\alpha+u^*\beta.\end{equation}

Let $U(R)$ denote the group of unitary transformations of $R$ and let $F$ be the subgroup formed by elements in $U(R)$ that commute with the $\Gamma$-action on $R$. The natural action of $F$ on $P$ is given by the following: for $f\in F$, $$(\alpha,\beta)\mapsto (f\alpha f^{-1},f\beta f^{-1}).$$ 

Again, the action of $F$ on $P$ is $\HH$-linear and preserves $M$. On the other hand, since $F$ acts by conjugation, the scalar subgroup $T\subset F$ acts trivially, and hence, we get an action of $F/T$ on $M$ that preserves $I$, $J$, $K$. 

Now, let $\f/\ttt$ be the Lie algebra of $F/T$ and identify $(\f/\ttt)^*$ with the traceless elements of $\f\subset End(R)$.  As the action of $F/T$ on $M$ is Hamiltonian with respect to $I$, $J$, $K$, we obtain the following moment maps:
$$\mu_1(\alpha,\beta)=\frac{i}{2}([\alpha,\alpha^*]+[\beta,\beta^*]),$$
$$\mu_2(\alpha,\beta)=\frac{1}{2}([\alpha,\beta]+[\alpha^*,\beta^*]),$$
$$\mu_3(\alpha,\beta)=\frac{i}{2}(-[\alpha,\beta]+[\alpha^*,\beta^*]).$$

Let $\mu=(\mu_2,\mu_2,\mu_3): M\to \R^3\otimes(\f/\ttt)^*$. Let $Z$ denote the center of $(\f/\ttt)^*$ and let $\zeta=(\zeta_1,\zeta_2,\zeta_3)\in \R^3\otimes Z$. For $\zeta$ lying in the ``good set", we get $X_\zeta=\mu^{-1}(\zeta)/F$ is a smooth $4$-manifold diffeomorphic to $\widetilde{\C^2/\Gamma}$.

\begin{rmk}
We don't make the notion of the ``good set" precise here, but its definition can be found in Proposition 2.8 in \cite{kronheimer}. It is the complement of a codimension-$3$ subset of $\R^3\otimes Z$. 
\end{rmk}

\end{subsection}

\begin{subsection}{A  gauge-theoretic construction of ALE spaces}

In this section, we review a gauge-theoretic construction of all the ALE spaces parallel to Kronheimer's construction. We do so by first describing the principal and vector bundles that we will be working with for this construction. Then we define the configuration space as well as the gauge group. Finally, we will lay out the gauge-theoretic construction in more detail and state the theorem concerning the moduli space. For an introduction to gauge theory and differential geometry, see \cite{morgan}, \cite{kobayashi1} and \cite{kobayashi2}. 

\begin{subsubsection}{Step 1: Introducing the principal and vector bundles}

Let $\Gamma$ be a finite subgroup of $SU(2)$. Consider the spherical Seifert fibered space $S^3/\Gamma$. We regard $S^3/\Gamma$ as a principal $S^1$-bundle over an orbifold Riemann surface as follows: Start with $S^3$ as a principal $S^1$-bundle over $\C P^1$ via the dual Hopf fibration. The explicit construction is as follows:
$$S^3=\{(z_1,z_2)\in \C^2: |z_1|^2+|z_2|^2=1\}.$$ The $S^1$-action on $S^3$ is given by the following: for $g=e^{i\theta} \in S^1$,
$$(z_1,z_2)g=(z_1e^{-i\theta}, z_2e^{-i\theta}).$$ 

We can think of the projection map $h$ from $S^3$ to $\C P^1$ given by $h: S^3 \to \C P^1$ with $h(z_1,z_2)=[z_1:z_2]$.

The finite group $\Gamma$ acts on $S^3$ naturally and the $\Gamma$-action commutes with the $S^1$-action. Additionally, $\Gamma$ acts nontrivially on the base $\C P^1$ with fixed points. Hence, instead of a principal $S^1$-bundle over $S^2$, $S^3/\Gamma$ is a principal $S^1$-bundle over the orbifold Riemann surface $S^2/\Gamma$. 

Now, we switch to the vector bundle perspective. Let $R$ be the regular representation of $\Gamma$, and let $V=End(R)$ be the endomorphism space of $R$. Then, we can consider the associated bundle of $S^3$ with fiber $V$, denoted $E(V)$. Explicitly, $E(V)$ is defined as $E(V)=S^3 \times V/\sim$, where $[p,v]\sim[pg,g^{-1}v]$, for all $g\in S^1$. As $\Gamma$ acts on $V$, we analogously get an associated orbifold vector bundle of $S^3/\Gamma$, denoted $E(\Gamma)$. We will be working with $E(\Gamma)$ for the gauge-theoretic construction. 

\end{subsubsection}

\begin{subsubsection}{Step 2: Introducing the gauge group and the configuration space}
We see that $E(V)$ splits into a direct sum of hyperplane bundles, that is, $$E(V)=\oplus_i H_i,$$ where each $H_i$ is isomorphic to the hyperplane bundle $H$.  We also equip  $E(\Gamma)$ the following auxiliary data: We fix a $\Gamma$-invariant hermitian structure $h_V$ on $V$ and hence get pointwise metrics on $E(V)$ and $E(\Gamma)$. We take $\omega_{vol}$ to be the Fubini-Study form on $\C P^1$. We fix a holomorphic structure on $E(\C)=H$, and denote it by $\bar{\partial}$. Let $A_0$ be the unique Chern connection on $H$ compatible with the holomorphic structure $\bar{\partial}$ and the hermitian structure descending from $E(V)$. Note that $A_0$ will be $\Gamma$-invariant as it is invariant under $SU(2)$ and can be thought of as a connection on $E(\Gamma)$.

Next, we introduce the configuration space and the gauge group. Let $F\subset U(R)$ be the subgroup of unitary transformations of $R$ that commute with the $\Gamma$-action, and let $T$ be the scalar subgroup sitting inside $F$. 

\begin{defn}
 \begin{enumerate}
 \item Let the $\tau$-invariant gauge group $\G^{F,\Gamma}_\tau$ of $E(\Gamma)$ be defined as $ \G^{F,\Gamma}_\tau= Map(S^2/\Gamma,F/T)$ such that an element $\rho\in \G^{F,\Gamma}_\tau$ is invariant under the involution $\tau:\C P^1\to\C P^1,  [z_1:z_2]\mapsto [-\bar{z}_2:\bar{z}_1]$. We remark here that $\tau$ commutes with the $\Gamma$-action and hence descends to a map $\tau: S^2/\Gamma \to S^2/\Gamma$. Let $\g^{F,\Gamma}_\tau$ denote the Lie algebra of $\G^{F,\Gamma}_\tau$. We use $\rho$ to denote an element in $\G^{F,\Gamma}_\tau$, and we use $Y$ to denote an element in $\g^{F,\Gamma}_\tau$.
\item Let $\G^{F,\Gamma}_{\C,\tau}$ denote the complexification of $\G^{F,\Gamma}_\tau$, where $F^c=GL_\C(V)^\Gamma$ denotes the complex linear transformations of $R$ that commute with the $\Gamma$-action. We use $\kappa$ to denote an element in $\G^{F,\Gamma}_{\C,\tau}$.
\end{enumerate}
\end{defn}

\begin{defn}
We define the configuration space to be $\A^F_\tau \times C^\infty(S^2/\Gamma, E(\Gamma))$ where $\A^F_\tau$ and $C^\infty(S^2/\Gamma, E(\Gamma))$ are defined as follows.
\begin{enumerate}
\item Let $\A^F_\tau$ be the space of connections on $E(\Gamma)$ given by $$\A^F_\tau=\{A_0+\kappa^*\partial \kappa^{*-1}+\kappa^{-1}\bar{\partial}\kappa \mid  \kappa \in \G^{F,\Gamma}_{\C,\tau} \},$$ where $A_0$ is the aforementioned Chern connection on $H$ or equivalently $S^3$ thought of as the induced connection on $E(\Gamma)$. We will always denote $\kappa^*\partial \kappa^{*-1}+\kappa^{-1}\bar{\partial}\kappa$ by $B$, and sometimes we omit the base connection $A_0$.
\item Let $C^\infty(S^2/\Gamma, E(\Gamma))$ denote the abbreviation for the space of orbifold vector bundle sections $C^\infty_{orb}(S^2/\Gamma, E(\Gamma))$. 
\end{enumerate}

\end{defn}

Notice that $\G^{F,\Gamma}_\tau$ is the subgroup of the group of unitary gauge automorphisms of $E(\Gamma)$. The action of $\rho \in \G^{F,\Gamma}$ on $\A^F_\tau \times C^\infty(S^2/\Gamma, E(\Gamma)) $ is given by the following: for a pair $(B,\Theta) \in \A^F_\tau \times C^\infty(S^2/\Gamma, E(\Gamma))$, $$\rho \cdot (B,\Theta)=(B+\rho d_{B}\rho^{-1},\rho \Theta \rho^{-1}).$$ Note that here we omit the base connection $A_0$ as $\rho$ fixes $A_0$.

\end{subsubsection}

\begin{subsubsection}{Step 3: A sketch of the gauge-theoretic construction of ALE spaces and the main theorem}

In this subsection, we describe the main construction and state the main theorem. To start off, we write down the standard symplectic structure on the configuration space.

\begin{defn}[Symplectic structure on $\A^F_\tau \times C^\infty(S^2/\Gamma, E(\Gamma))$]
Let $$(B_1,\Theta_1), (B_2,\Theta)\in\A^F_\tau \times C^\infty(S^2/\Gamma, E(\Gamma)).$$  Let a symplectic $2$-form $\boldsymbol{\Omega}$ on $\A^F_\tau \times C^\infty(S^2/\Gamma, E(\Gamma))$ be defined as follows: $$\boldsymbol{\Omega}((B_1,\Theta_1), (B_2,\Theta))=\int_{S^2/\Gamma}B_1\wedge B_2+\int_{S^2/\Gamma}-Im\langle\Theta_1,\Theta_2\rangle\omega_{vol}.$$ 
\end{defn}

More interestingly, in addition to the standard symplectic structure, the space of sections $C^\infty(S^2/\Gamma, E(\Gamma))$ is in fact hyperkähler. Before we write down the kähler forms, we first introduce some notations. 

For a section $\Theta$ on $C^\infty(S^2/\Gamma, E(\Gamma))$, we identify $\Theta$ with an $S^1$- and $\Gamma$-equivariant map $\lambda: S^3 \to End(R)$, and hence we can express $\Theta$ as $$\Theta: x \mapsto [p, \lambda(p)],$$ for $x \in S^2$ and $p \in h^{-1}(x) \subset S^3$.

There is a complex structure $J$, in addition to the standard complex structure $I$ given by the multiplication by $i$, on the space of sections $C^\infty(S^2/\Gamma, E(\Gamma))$, which we can express as follows. Let $\Theta: x \mapsto [p, \lambda(p)]$ be given, the action of $J$ on $\Theta$ is given by $$J\Theta: x \mapsto [p, -\lambda(J(p))^*],$$ where $p \in S^3$ and $J$ on $S^3$ is just the usual quaternion action. 

\begin{prop}
There are three symplectic forms on $C^\infty(S^2/\Gamma, E(\Gamma))$ compatible with complex structures $I$, $J$, $K$, respectively:

$$\omega_1(\Theta_1,\Theta_2)=\int_{S^2/\Gamma}-Im\langle \Theta_1,\Theta_2\rangle\omega_{vol},$$
$$\omega_2(\Theta_1,\Theta_2)=\int_{S^2/\Gamma}Re \langle J\Theta_1, \Theta_2\rangle \omega_{vol},$$
$$\omega_3(\Theta_1,\Theta_2)=\int_{S^2/\Gamma}-Im\langle J\Theta_1, \Theta_2\rangle\omega_{vol},$$
and a hyperkähler metric $g_h$ such that
$$g_h(\Theta_1,\Theta_2)=\int_{S^2/\Gamma}Re\langle\Theta_1,\Theta_2\rangle\omega_{vol},$$
together giving rise to a hyperkähler structure on $C^\infty(S^2/\Gamma, E(\Gamma))$.

\end{prop}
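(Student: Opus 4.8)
The plan is to verify that each of the three $2$-forms $\omega_1$, $\omega_2$, $\omega_3$ is a closed, nondegenerate symplectic form compatible with the corresponding complex structure $I$, $J$, $K$, and that together with the metric $g_h$ they assemble into a hyperk\"ahler structure on the infinite-dimensional vector space $C^\infty(S^2/\Gamma, E(\Gamma))$. The key simplifying observation is that $C^\infty(S^2/\Gamma, E(\Gamma))$ is a (flat) vector space, so each form is translation-invariant and may be regarded as a constant bilinear form; closedness is therefore automatic, and the real content is the pointwise hyperk\"ahler algebra fiberwise, together with the compatibility of the three structures under the integration $\int_{S^2/\Gamma}(\,\cdot\,)\,\omega_{vol}$.

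First I would establish that $I$, $J$, and the induced $K = IJ$ satisfy the quaternionic relations on the space of sections. The complex structure $I$ is multiplication by $i$, and $J$ is given explicitly by $J\Theta : x \mapsto [p, -\lambda(J(p))^*]$; I would check $J^2 = -\mathrm{id}$ and $IJ = -JI$ directly from these formulas, using that the quaternionic $J$-action on $S^3$ squares to $-1$ and that the hermitian adjoint is conjugate-linear, which is exactly what makes $J$ anti-commute with multiplication by $i$. This reduces the quaternionic relations to the pointwise structure already present on $End(R)$ as an $\HH$-module via the identification from Kronheimer's construction. Next I would identify each $\omega_a$ as $g_h(\,\cdot\, , \Phi_a \,\cdot\,)$ for the appropriate complex structure $\Phi_a \in \{I, J, K\}$; for instance $\omega_2(\Theta_1,\Theta_2) = \int_{S^2/\Gamma} Re\langle J\Theta_1,\Theta_2\rangle\,\omega_{vol} = g_h(J\Theta_1,\Theta_2)$, and similarly for $\omega_1$ and $\omega_3$ after rewriting $-Im\langle\Theta_1,\Theta_2\rangle = Re\langle i\Theta_1,\Theta_2\rangle = g_h(I\Theta_1,\Theta_2)$. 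Compatibility, i.e.\ that each $\omega_a$ is of type $(1,1)$ and tames the metric, then follows from the compatibility of $g_h$ with each complex structure, which in turn reduces to $g_h(\Phi_a\Theta_1,\Phi_a\Theta_2) = g_h(\Theta_1,\Theta_2)$; this last identity is the statement that $I$, $J$, $K$ act by isometries, which I would verify from the fact that multiplication by $i$ and the map $\lambda \mapsto -\lambda(J(\cdot))^*$ preserve $Re\langle\,\cdot\,,\,\cdot\,\rangle$ pointwise and that $\omega_{vol}$ is $J$-invariant on the base.

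Closedness $d\omega_a = 0$ is immediate since the forms are constant (translation-invariant) on a vector space, so the three forms are genuinely symplectic once nondegeneracy is checked; nondegeneracy follows because each $\omega_a$ is obtained from the positive-definite pairing $g_h$ by precomposition with an invertible operator $\Phi_a$. Finally I would note that the metric $g_h$ is K\"ahler with respect to all three complex structures simultaneously, which is precisely the definition of a hyperk\"ahler structure; no integrability conditions beyond the algebraic ones are needed here because we are on a flat vector space and the complex structures are constant.

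The main obstacle I anticipate is bookkeeping with the adjoint and the quaternionic action in the definition of $J$: one must be careful that the map $\lambda \mapsto -\lambda(J(p))^*$ is genuinely well-defined on sections (i.e.\ respects the $S^1$- and $\Gamma$-equivariance built into the identification of $\Theta$ with $\lambda$) and that it descends correctly to $S^2/\Gamma$. Verifying that $J$ preserves the space of equivariant maps, and that the adjoint interacts correctly with the $\Gamma$-invariant hermitian structure $h_V$ so that $\langle J\Theta_1, J\Theta_2\rangle$ pairs properly under $\int_{S^2/\Gamma}$, is where the real care is required; once this equivariance and the pointwise $\HH$-module structure are in hand, the remaining verifications are routine linear algebra carried under the integral sign.
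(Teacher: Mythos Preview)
The paper does not supply a proof of this proposition: it appears in a review subsection summarizing the construction from \cite{yan}, and the statement is simply recorded without argument. So there is no ``paper's own proof'' against which to compare your proposal.

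On its own merits, your outline is correct and is exactly the standard verification one would carry out. The key reductions you identify are the right ones: the section space is a flat vector space, so closedness of the translation-invariant forms is automatic; the identities $\omega_a(\Theta_1,\Theta_2)=g_h(\Phi_a\Theta_1,\Theta_2)$ reduce compatibility and nondegeneracy to the positivity of $g_h$ together with the fact that $I,J,K$ are $g_h$-isometries; and the quaternionic relations for $I,J,K$ follow from the pointwise $\HH$-module structure on $End(R)$ inherited from Kronheimer's construction. Your flagged obstacle --- checking that $J\Theta:x\mapsto[p,-\lambda(J(p))^*]$ respects the $S^1$- and $\Gamma$-equivariance so that it descends to a well-defined endomorphism of sections over $S^2/\Gamma$, and that the Fubini--Study form $\omega_{vol}$ is invariant under the induced involution of the base --- is indeed the only place where any real care is needed, and you have identified it correctly.
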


It turns out that the action of the $\tau$-invariant gauge group $\G^{F,\Gamma}_\tau$ on the space of sections of $E(\Gamma)$ with respect to each one of the three symplectic forms is Hamiltonian. Hence, we can write down the following equations inspired by the hyperkähler reduction construction and operate a reduction on the configuration space given by the main theorem stated below:

For $\tilde{\zeta}=(\tilde{\zeta}_1,\tilde{\zeta}_2,\tilde{\zeta}_3) \in \R^3\otimes Z$, where $Z$ is the center of $(\f/\ttt)^*$ thought of as traceless matrices in $\f/\ttt$, we consider

\begin{equation} \label{holo}
\bar{\partial}_{A_0+B} \Theta=0
\end{equation}

\begin{equation} \label{eq1}
F_B-\frac{i}{2}[\Theta,\Theta^*]\omega_{vol}=\tilde{\zeta}_1\cdot\omega_{vol}
\end{equation}

\begin{equation}\label{mu2}
-\frac{1}{4}([J\Theta,\Theta^*]-[\Theta,J\Theta^*])\omega_{vol}=\tilde{\zeta}_2\cdot\omega_{vol},
\end{equation}

\begin{equation}\label{mu3}
-\frac{i}{4}([J\Theta,\Theta^*]+[\Theta,J\Theta^*])\omega_{vol}=\tilde{\zeta}_3\cdot\omega_{vol}.
\end{equation}

\begin{thm}\label{thm}
Let $\tilde{\zeta}=(\tilde{\zeta}_1,\tilde{\zeta}_2,\tilde{\zeta}_3)$, where for all $i$, $\tilde{\zeta}_i\in Z$. Let $$\X_{\tilde{\zeta}}=\{(B,\Theta)\in \A^F \times C^\infty(S^2/\Gamma, E(\Gamma)) | (1) - (4)\}/\G_\tau^{F,\Gamma}.$$ Then for a suitable choice of $\tilde{\zeta}$, $\X_{\tilde{\zeta}}$ is diffeomorphic to the resolution of singularity $\widetilde{\C^2/\Gamma}$. Furthermore, for $\zeta=\tilde{\zeta}^*=-\tilde{\zeta}$, there exists a map $\Phi$ taking  $X_{\zeta}$ to $\X_{\tilde\zeta}$ and a natural choice of metric on $\X_{\tilde\zeta}$  such that $\Phi$ is an isometry.
\end{thm}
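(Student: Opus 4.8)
The plan is to identify the infinite-dimensional hyperkähler quotient $\X_{\tilde\zeta}$ with Kronheimer's finite-dimensional hyperkähler quotient $X_\zeta$, and then to invoke Kronheimer's theorem that $X_\zeta$ is diffeomorphic to $\widetilde{\C^2/\Gamma}$. The mechanism is the standard dictionary of the Hitchin--Kobayashi correspondence, adapted to the present orbifold-equivariant, $\tau$-invariant setting: the Kähler quotient of the space of solutions to the complex equation \eqref{holo} by the unitary gauge group $\G^{F,\Gamma}_\tau$ is identified with a GIT-type quotient by the complexified gauge group $\G^{F,\Gamma}_{\C,\tau}$, so that it suffices to understand the complex-gauge orbits of holomorphic data together with the reduced moment-map level set.

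First I would fix the holomorphic structure. By definition every connection $B\in\A^F_\tau$ has the form $\kappa^*\partial\kappa^{*-1}+\kappa^{-1}\bar\partial\kappa$ for some $\kappa\in\G^{F,\Gamma}_{\C,\tau}$, so the complexified gauge group acts transitively on the $(0,1)$-parts, and every pair $(B,\Theta)$ satisfying \eqref{holo} is $\G^{F,\Gamma}_{\C,\tau}$-equivalent to a pair $(0,\Theta')$ with $\Theta'$ holomorphic for the fixed holomorphic structure on $E(\Gamma)$ induced by $A_0$. The essential finite-dimensionalization is the rigidity of holomorphic sections: since $E(V)$ splits as a sum of copies of the hyperplane bundle $H$ over $\C P^1$, and $\Theta$ is realized as an $S^1$- and $\Gamma$-equivariant map $\lambda:S^3\to End(R)$, the space of $A_0$-holomorphic sections is exactly $Q\otimes End(R)$ cut down by $\Gamma$-invariance, namely Kronheimer's space $M=P^\Gamma$. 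Under this identification $\Theta\leftrightarrow(\alpha,\beta)$, and the residual complexified symmetry preserving $A_0$ is the constant subgroup $F^c/T$, acting by the conjugation $(\alpha,\beta)\mapsto(f\alpha f^{-1},f\beta f^{-1})$ of Kronheimer's construction.

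It then remains to match the moment-map equations. The equations \eqref{mu2} and \eqref{mu3}, together with \eqref{holo}, should reduce under $\Theta\leftrightarrow(\alpha,\beta)$ and after integrating the pointwise brackets against $\omega_{vol}$ to Kronheimer's complex conditions $\mu_2(\alpha,\beta)=\zeta_2$ and $\mu_3(\alpha,\beta)=\zeta_3$, while the curvature equation \eqref{eq1} reduces to the real moment-map equation $\mu_1(\alpha,\beta)=\zeta_1$ in the gauge $B=0$; the sign relation $\zeta=-\tilde\zeta$ records the orientation convention relating the curvature moment map to Kronheimer's $\mu$. Combining this with the Hitchin--Kobayashi step yields a bijection $\X_{\tilde\zeta}\cong\mu^{-1}(\zeta)/F=X_\zeta$, which is smooth with smooth inverse, and for $\tilde\zeta$ in the good set Kronheimer's Proposition 2.8 identifies $X_\zeta$ with $\widetilde{\C^2/\Gamma}$. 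For the isometry I would compute the reduced $L^2$ metric directly: for a holomorphic section corresponding to $(\alpha,\beta)$, the pointwise norm integrates over $S^2/\Gamma$ to a fixed constant multiple of $\|\alpha\|^2+\|\beta\|^2$, the constant depending only on the Fubini--Study normalization of $\omega_{vol}$ and the $L^2$-norm of the standard holomorphic section of $H$. Rescaling $g_h$ by this constant produces the natural metric for which $\Phi$ is an isometry; and since the three forms $\omega_1,\omega_2,\omega_3$ are built from the same pointwise data that reduces to Kronheimer's $I,J,K$, the entire hyperkähler structure, not merely the Riemannian metric, is preserved.

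The main obstacle is the analytic core of the Hitchin--Kobayashi step: proving that within each stable complexified gauge orbit of holomorphic data the curvature equation \eqref{eq1} admits a solution, unique up to unitary gauge. This demands an existence-and-uniqueness result for the associated vortex-type equation on the orbifold base $S^2/\Gamma$, respecting the $\Gamma$-equivariance and $\tau$-invariance, with the stability condition matched exactly to Kronheimer's good set. A secondary technical point is to verify that the finite-dimensional reduction of the moment maps is exact, with no corrections arising from the nonconstant modes of the gauge group or from the orbifold points; this I would check by a direct computation using the explicit equivariant presentation $\Theta:x\mapsto[p,\lambda(p)]$ together with the splitting $E(V)=\oplus_i H_i$.
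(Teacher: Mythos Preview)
The paper does not actually prove this theorem: Section~2 is explicitly a review section, and Theorem~\ref{thm} is stated without proof as the main result of the author's earlier paper \cite{yan}, followed only by a remark on the meaning of ``suitable $\tilde\zeta$.'' So there is no in-paper proof to compare your proposal against.

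That said, your outline is the natural strategy and is consistent with the hints the paper does give. In particular, your metric-matching step --- that the $L^2$ norm of a holomorphic section reduces to a constant multiple of $\|\alpha\|^2+\|\beta\|^2$ --- is exactly Lemma~5.1 of the paper, proved there by the elementary Fubini--Study integral. Your identification of $A_0$-holomorphic sections with $M=P^\Gamma$ via the equivariant map $\lambda:S^3\to End(R)$ is also the mechanism the paper uses throughout (e.g.\ in Sections~3--5) when passing freely between $\Theta$ and the pair $(\alpha,\beta)$. The one place where your sketch is honest about a real gap is the Hitchin--Kobayashi step: establishing existence and uniqueness of a solution to \eqref{eq1} in each stable complex-gauge orbit, with stability matched to Kronheimer's good set, is the genuine analytic content, and you would need to consult \cite{yan} to see how it is carried out in this orbifold, $\tau$-invariant setting.
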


\begin{rmk}
We don't make the notion of ``a suitable choice of $\tilde{\zeta}$" in the statement of the theorem precise here. It is defined analogously to the notion of the ``good set" appearing in the previous section, introduced by Kronheimer. Again, it is the complement of a codimension-$3$ subset of $\R^3\otimes Z$. 
\end{rmk}

\end{subsubsection}

\end{subsection}

\section{Hopf fibration and the pullback bundle $E(\Gamma)^h$}

As introduced in the previous section, the orbifold vector bundle $E(\Gamma)$ is built from the Hopf fibration of $S^3$. In this section, we define a pullback bundle $E(\Gamma)^h$ of $E(\Gamma)$ by the Hopf fibration $h: S^3 \to \C P^1$. First, we pullback $E(End(R))$ by $h$ and get the following commutative diagram:  \begin{center}
\begin{tikzcd}

E(End(R))^h \arrow{r} \arrow{d}{\pi^h}&  E(End(R)) \arrow{d}{\pi}\\
S^3 \arrow{r}{h}& S^2\\
\end{tikzcd}
\end{center}

The above diagram is $\Gamma$-equivariant and hence descends to the following commutative diagram:

\begin{center}
\begin{tikzcd}

E(\Gamma)^h \arrow{r} \arrow{d}{\pi^h}&  E(\Gamma) \arrow{d}{\pi}\\
S^3/\Gamma \arrow{r}{h}& S^2/\Gamma\\
\end{tikzcd}
\end{center}

\begin{lem}
$E^h(\Gamma) $ is a trivial vector bundle over $S^3/\Gamma$ with fibers isomorphic to $End(R)$.
\end{lem}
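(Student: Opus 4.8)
The plan is to prove triviality by working upstairs on the genuine principal circle bundle $h\colon S^3\to\C P^1$, where a tautological trivialization is available, and then to descend across the free $\Gamma$-action. The starting point is that, since the whole pullback diagram is $\Gamma$-equivariant, $E(\Gamma)^h$ is the quotient $E(\mathrm{End}(R))^h/\Gamma$; it therefore suffices to (i) trivialize $E(\mathrm{End}(R))^h$ over $S^3$, (ii) read off the residual $\Gamma$-action in that trivialization, and (iii) show the resulting quotient bundle over $S^3/\Gamma$ is trivial.

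For (i) I would invoke the standard fact that the pullback of an associated bundle along the projection of its own principal bundle is canonically trivial: over $p\in S^3$ the fibre of $E(\mathrm{End}(R))^h$ is $E(\mathrm{End}(R))_{h(p)}$, and each element there is uniquely of the form $[p,w]$, giving $E(\mathrm{End}(R))^h\cong S^3\times\mathrm{End}(R)$ via $(p,[pg,w])\mapsto(p,g\cdot w)$ (equivalently, every complex vector bundle over $S^3$ is trivial since $H^2(S^3;\Z)=0$). For (ii), transporting the diagonal $\Gamma$-action through this identification gives $\gamma\cdot(p,w)=(\gamma p,\,R(\gamma)\,w\,R(\gamma)^{-1})$. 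As $\Gamma$ acts freely on $S^3$ with quotient the $3$-manifold $S^3/\Gamma$ (with $\pi_1=\Gamma$), the fibre over $[p]$ is canonically $\mathrm{End}(R)$, and $E(\Gamma)^h$ is precisely the flat bundle with monodromy $\rho=\mathrm{Ad}\circ R\colon\Gamma\to GL(\mathrm{End}(R))$.

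Step (iii) is the heart of the matter. Since $\Gamma$ is finite, $\rho$ is unitarizable, so after fixing a $\Gamma$-invariant metric we may take $\rho\colon\Gamma\to U(\mathrm{End}(R))$. The key computation is that its determinant character is trivial: writing $\mathrm{End}(R)\cong R\otimes R^{*}$ and $N=\dim R=|\Gamma|$, one gets $\det\big(\mathrm{Ad}_{R(\gamma)}\big)=(\det R(\gamma))^{N}(\det R(\gamma))^{-N}=1$, so in fact $\rho\colon\Gamma\to SU(\mathrm{End}(R))$. Setting $m=\dim\mathrm{End}(R)$, the underlying topological bundle of this flat $SU(m)$-bundle is classified by the composite $S^3/\Gamma\to B\Gamma\xrightarrow{B\rho}BSU(m)$; since $BSU(m)$ is $3$-connected while $S^3/\Gamma$ is only $3$-dimensional, this map is null-homotopic, and the bundle is trivial. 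This establishes the lemma.

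I expect step (iii) to be the main obstacle, and it is exactly where the special structure of the problem is used. Triviality is not a formal consequence of the pullback construction: the analogous flat bundle built from $R$ itself is generically nontrivial --- already for $\Gamma=\Z_2$ the character $\gamma\mapsto\det R(\gamma)$ is nontrivial, obstructing triviality of $S^3\times_{R}R$ over $\mathbb{RP}^3$ --- and it is only the passage to the endomorphism (adjoint) bundle, whose determinant character collapses, that forces $c_1=0$. A secondary subtlety worth flagging is that one should not attempt the tautological trivialization directly on $S^3/\Gamma\to S^2/\Gamma$: this map is only a Seifert (orbifold) circle bundle, and $E(\Gamma)$ acquires singular fibres over the orbifold points of $S^2/\Gamma$, so the clean principal-bundle argument becomes available only after lifting to $S^3\to\C P^1$.
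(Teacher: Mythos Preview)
Your proof is correct. The paper omits its own proof entirely, stating only that the lemma ``is a standard result in bundle theory applied to the particular setting,'' so there is no argument to compare against. Your approach---trivializing upstairs on $S^3$ via the tautological section, identifying the residual $\Gamma$-action as $\mathrm{Ad}\circ R$, and then killing the only obstruction over a $3$-manifold by showing $\det(\mathrm{Ad}_{R(\gamma)})=1$ and invoking the $3$-connectedness of $BSU(m)$---supplies precisely the details the paper elides. Your closing remarks also correctly locate where the argument has genuine content: the passage from $R$ to $\mathrm{End}(R)$ is essential, since the flat bundle $S^3\times_\Gamma R$ can have nontrivial $c_1$ (as your $\Gamma=\Z_2$ example shows), and it is only the collapse of the determinant character under $\mathrm{Ad}$ that forces triviality.
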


We omit the proof of the previous lemma as it is a standard result in bundle theory applied to the particular setting. We think of $E^h(\Gamma) \cong S^3/\Gamma \times End(R)$ as the bundle of endomorphisms acting on the trivial bundle $S^3/\Gamma \times R$. Let $\Theta$ be a section of $E(\Gamma)$, then by identifying $\Theta$ with a $\Gamma$-equivariant map $\lambda: S^3\to End(R)$, we can think of $\Theta$ as a section of $E^h(\Gamma)$ as well. As a result, we can define $A_\Theta$ as follows: $$A_\Theta=d + d\Theta.$$ We think of $A_\Theta$ as a connection acting on the sections of  $S^3/\Gamma \times R$. In particular, if $\Theta$ is a holomorphic section and we write $\Theta=(\alpha,\beta)$, then we have $$A_\Theta=A_{(\alpha,\beta)}=d+\alpha dz_1+\beta dz_2.$$

\begin{lem}
Let $\Theta=(\alpha,\beta)$ be a holomorphic section. The connection $A_\Theta$ is flat if and only if $\zeta_2=\zeta_3=0$.
\end{lem}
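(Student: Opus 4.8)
The plan is to compute the curvature of $A_\Theta$ directly and then read off the vanishing of its only nonzero component through the complex moment map. Since $\Theta=(\alpha,\beta)$ is holomorphic, the connection $1$-form is the $(1,0)$-form $\alpha\,dz_1+\beta\,dz_2$ with $\alpha,\beta\in End(R)$ \emph{constant}: holomorphic sections of the hyperplane-type summands of $E(V)$ are linear in $z_1,z_2$, so the corresponding equivariant map is $\lambda(z_1,z_2)=\alpha z_1+\beta z_2$ and $d\lambda=\alpha\,dz_1+\beta\,dz_2$. First I would compute
$$F_{A_\Theta}=d(\alpha\,dz_1+\beta\,dz_2)+(\alpha\,dz_1+\beta\,dz_2)\wedge(\alpha\,dz_1+\beta\,dz_2).$$
Because $\alpha,\beta$ are constant and the $dz_i$ are closed, the first term vanishes; expanding the matrix-valued wedge product, using $dz_i\wedge dz_i=0$ and $dz_2\wedge dz_1=-dz_1\wedge dz_2$, leaves
$$F_{A_\Theta}=[\alpha,\beta]\,dz_1\wedge dz_2.$$
As the restriction of $dz_1\wedge dz_2$ to $S^3/\Gamma$ is not identically zero, the $End(R)$-valued $2$-form $F_{A_\Theta}$ vanishes identically if and only if $[\alpha,\beta]=0$.

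It then remains to relate $[\alpha,\beta]$ to the parameters $\zeta_2,\zeta_3$. The crux is that the complex combination of Kronheimer's moment maps collapses to the commutator: a direct computation gives
$$\mu_2(\alpha,\beta)+i\,\mu_3(\alpha,\beta)=[\alpha,\beta],$$
the cross terms $[\alpha^*,\beta^*]$ cancelling. Since $(\alpha,\beta)$ represents a point of the moduli space, it satisfies the moment-map constraints (equivalently the reduction equations \eqref{mu2}--\eqref{mu3} after identifying the holomorphic section), so $\mu_2(\alpha,\beta)=\zeta_2$ and $\mu_3(\alpha,\beta)=\zeta_3$, up to the sign/conjugation convention $\zeta=-\tilde\zeta$ of Theorem~\ref{thm} which does not affect the vanishing locus. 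Hence $[\alpha,\beta]=\zeta_2+i\zeta_3$, and because $\zeta_2,\zeta_3$ are independent real components, $[\alpha,\beta]=0$ if and only if $\zeta_2=\zeta_3=0$. Chaining the equivalences gives the lemma.

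The step I expect to require the most care is the last one: pinning down precisely which normalization of the moment map applies to the holomorphic section, and confirming that $[\alpha,\beta]=\zeta_2+i\zeta_3$ holds on the nose, so that the single complex equation $[\alpha,\beta]=0$ genuinely decouples into the two real conditions $\zeta_2=0$ and $\zeta_3=0$. The curvature computation itself is routine; there the only mild subtlety is verifying that $dz_1\wedge dz_2$ does not restrict to the zero form on the $3$-manifold $S^3/\Gamma$ — this rules out spurious flatness arising from the ambient sphere constraint rather than from $[\alpha,\beta]$, and it can be checked directly at any point where, say, $z_1\neq 0\neq z_2$.
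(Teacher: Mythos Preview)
Your proposal is correct and follows essentially the same route as the paper: compute $F_{A_\Theta}=[\alpha,\beta]\,dz_1\wedge dz_2$, then use the two real moment-map equations to show $[\alpha,\beta]=0\iff\zeta_2=\zeta_3=0$. Your packaging via the complex combination $\mu_2+i\mu_3=[\alpha,\beta]$ is just a linear recombination of the paper's two displayed equations, and your extra remarks about the non-vanishing of $dz_1\wedge dz_2$ on $S^3/\Gamma$ and the hermitian/anti-hermitian decoupling of $\zeta_2,\zeta_3$ are welcome sanity checks that the paper leaves implicit.
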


\begin{proof}
We compute the curvature of $A_\Theta$ as follows: \begin{align}F_{A_\Theta}&=dA_\Theta+ A_\Theta\wedge A_\Theta \\&=\alpha dz_1\wedge \beta dz_2+\beta dz_2\wedge \alpha dz_1\\&=[\alpha,\beta]dz_1\wedge dz_2. \end{align} Hence, $F_{A_\Theta}=0$ is equivalent to $[\alpha,\beta]=0$. By equations $$\frac{1}{2}([\alpha,\beta]+[\alpha^*,\beta^*])=\zeta_2,$$
$$\frac{i}{2}(-[\alpha,\beta]+[\alpha^*,\beta^*])=\zeta_3,$$ we have $[\alpha,\beta]=0$ is equivalent to $\zeta_2=\zeta_3=0$.

\end{proof}

\section{Calculation of the holonomy representations}

From this point on, we fix $\zeta=(\zeta_1,0,0)$ to be an element in the good set. Note that this is always possible since the good set is the complement of a codimension-$3$ subset of $\R^3\otimes Z$, and by setting $\zeta_2=\zeta_3=0$, we can choose $\zeta_1$ to lie in a codimension-$1$ subset of $Z$ to ensure $\zeta=(\zeta_1,0,0)$ lies in the good set. Continuing from the previous section, since $\zeta_2=\zeta_3=0$, a solution $\Theta=(\alpha,\beta)$ gives rise to a flat connection $A_\Theta$ and hence induces a holonomy representation $$\rho_\Theta: \pi_1(S^3/\Gamma)=\Gamma \to GL(R). $$
We now compute $\rho_\Theta$. First, fix $(v_1,v_2)\in S^3\subset \C^2$ and $\gamma\in\Gamma$, let $$\tilde{\gamma}:[0,1]\to \C^2$$ be the linear path connecting $(v_1,v_2)$ to $\gamma\cdot(v_1,v_2)$. More explicitly, we let $$\gamma=\begin{pmatrix}
u & v \\
-v^* & u^* 
\end{pmatrix}\in\Gamma\subset SU(2),$$ and we have $\tilde{\gamma}(t)=((1-t+tu)v_1-t\bar{v}v_2,tvv_1+(1-t+t\bar{u})v_2)$, with $\gamma$ acting on $\C^2$ on the right. Now we define the path $\gamma(t)$ to be the projection of $\tilde{\gamma}(t)$ onto $S^3$, that is $$\gamma(t)=\frac{\tilde{\gamma}(t)}{\Vert \tilde{\gamma}(t) \Vert}.$$ 

By construction, $\gamma(t)$ is a loop representing $\gamma\in\pi_1(S^3/\Gamma;(v_1,v_2))=\Gamma$. Now, to calculate the holonomy representation at $A_\Theta$, we need to solve the following differential equation given by the parallel transport along $\gamma(t)$: \begin{equation} dX(t)_{{\gamma}(t)}({\dot{\gamma}(t)})+ \alpha(X(t)){dz_1}_{{\gamma}(t)}({\dot{\gamma}(t)})+\beta(X(t)){dz_2}_{{\gamma}(t)}({\dot{\gamma}(t)})=0,\end{equation} where $X(t):[0,1]\to R$, $X(0)=X_0$. Here, we can think of $X(t)$ as $X(t)=X\vert_{\gamma(t)}$, where $X$ is a section of the trivial bundle over $S^3$ with fibers isomorphic to $R$. Equivalently, we can write equation (4.1) as \begin{equation} dX(t)_{{\gamma}(t)}({\dot{\gamma}(t)})=- \alpha(X(t)){dz_1}_{{\gamma}(t)}({\dot{\gamma}(t)})-\beta(X(t)){dz_2}_{{\gamma}(t)}({\dot{\gamma}(t)}).\end{equation} Equation (4.2) is an ordinary differential equation, and at $\gamma(t)=(z_1,z_2)$, we have $$X(t)=X(z_1,z_2)=\exp((v_1-z_1)\alpha+(v_2-z_2)\beta)(X_0).$$ We can check that \begin{align*}
dX_{(z_1,z_2)} &= (-\alpha\exp((v_1-z_1)\alpha+(v_2-z_2)\beta)dz_1)(X_0)\\&+(-\beta\exp((v_1-z_1)\alpha+(v_2-z_2)\beta)dz_2)(X_0) \\
  &= -\alpha(X(z_1,z_2))dz_1-\beta(X(z_1,z_2))dz_2.
\end{align*}
 Note that for $X(t)$ to be well-defined, we need $[\alpha,\beta]=0$, which is precisely when $A_\Theta$ is flat. Hence, we have $X(v_1,v_2)=X(0)=X_0$, and $$X(1)=X(\gamma(v_1,v_2))=\exp((v_1-(uv_1-\bar{v}v_2))\alpha+(v_2-(\bar{u}v_2+vv_1))\beta)(X_0).$$ To get the holonomy representation at $A_\Theta$, we pullback $X(1)$ to the same fiber as $X_0$ over $(v_1,v_2)$ by the regular representation of $\Gamma$. As a result, we have the holonomy representation $\rho_\Theta=\rho_{(\alpha,\beta)}: \Gamma \to GL(R)$ given by $$\gamma \mapsto R(\gamma)\exp((v_1-(uv_1-\bar{v}v_2))\alpha+(v_2-(\bar{u}v_2+vv_1))\beta).$$

\begin{prop}\label{prop}
$\rho_\Theta$ is isomorphic to the regular representation of $\Gamma$, for any $\Theta$ in $X_\zeta$.
\end{prop}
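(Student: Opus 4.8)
The plan is to produce a single invertible intertwiner, independent of $\gamma$, that conjugates $\rho_\Theta$ to the regular representation $R$. The gauge-theoretic picture points to the natural candidate: on the universal cover $S^3$ the flat connection $A_\Theta$ is gauge-equivalent to the trivial connection via $\exp(z_1\alpha+z_2\beta)$, so evaluating this gauge transformation at the fixed base point $(v_1,v_2)$ should give the intertwiner. Concretely, I would set
$$P=\exp(v_1\alpha+v_2\beta),$$
which is invertible as a matrix exponential, and reduce the proposition to proving the single identity $\rho_\Theta(\gamma)=P^{-1}R(\gamma)P$ for every $\gamma\in\Gamma$. Since $P$ does not depend on $\gamma$, this identity simultaneously exhibits $\rho_\Theta$ as a homomorphism and as isomorphic to $R$.

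Before the main computation I would record one structural fact. Since we have fixed $\zeta=(\zeta_1,0,0)$, the connection $A_\Theta$ is flat and hence $[\alpha,\beta]=0$ (the flatness lemma of Section 3). Thus $\alpha$ and $\beta$ generate a commutative subalgebra of $\mathrm{End}(R)$, so exponentials of linear combinations of $\alpha,\beta$ multiply additively, $\exp(a\alpha+b\beta)\exp(a'\alpha+b'\beta)=\exp((a+a')\alpha+(b+b')\beta)$, and any conjugation sending $v_1\alpha+v_2\beta$ to another linear combination of $\alpha,\beta$ passes through the exponential. This reduces the whole problem to a linear computation on the span of $\alpha$ and $\beta$.

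The core of the argument is to convert the $SU(2)$-action on the base point into the adjoint action on the pair $(\alpha,\beta)$ by means of the $\Gamma$-equivariance relations \eqref{1} and \eqref{22}. Writing $u,v$ for the entries of $\gamma$ and using $R(\gamma^{-1})=R(\gamma)^{-1}$, these relations give
$$R(\gamma)^{-1}(v_1\alpha+v_2\beta)R(\gamma)=v_1(u\alpha+v\beta)+v_2(-v^*\alpha+u^*\beta)=w_1\alpha+w_2\beta,$$
where $(w_1,w_2)=(uv_1-v^*v_2,\;vv_1+u^*v_2)=\gamma\cdot(v_1,v_2)$ is precisely the endpoint of the lifted path defining the holonomy. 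Exponentiating and using commutativity then yields $R(\gamma)^{-1}P^{-1}R(\gamma)P=\exp((v_1-w_1)\alpha+(v_2-w_2)\beta)$, which is exactly the exponential factor occurring in the explicit formula $\rho_\Theta(\gamma)=R(\gamma)\exp((v_1-w_1)\alpha+(v_2-w_2)\beta)$. Multiplying on the left by $R(\gamma)$ gives $\rho_\Theta(\gamma)=P^{-1}R(\gamma)P$, so $\rho_\Theta\cong R$.

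I would close by observing that the construction is insensitive to the choice of base point: a different $(v_1,v_2)$ produces a conjugate $P$ and hence an isomorphic representation, consistent with $\rho_\Theta$ being the holonomy of a flat connection and well-defined only up to conjugacy. I expect the main obstacle to be purely the bookkeeping in the middle step — correctly matching the right-action of $\gamma\in SU(2)$ on the coordinates $(v_1,v_2)$ with the adjoint action on $(\alpha,\beta)$ prescribed by \eqref{1} and \eqref{22}, keeping the complex conjugates and the sign conventions of $\gamma$ straight — rather than any conceptual difficulty, since once the transformation law of $P$ under conjugation by $R(\gamma)$ is pinned down, the conclusion follows immediately.
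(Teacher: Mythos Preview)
Your argument is correct and is genuinely different from the paper's proof. The paper proceeds in two steps: first, at an $S^1$-fixed point it uses the McKay-quiver structure to argue that $\alpha,\beta$ are nilpotent with no nonzero diagonal blocks, so that $\exp(c\alpha+d\beta)$ differs from the identity only off the block diagonal and hence $\rho_\Theta(\gamma)$ has the same character as $R(\gamma)$; second, it extends to arbitrary $\Theta$ by flowing down to an $S^1$-fixed point and invoking discreteness of the character variety of the finite group $\Gamma$. Your route bypasses both steps by writing down the explicit intertwiner $P=\exp(v_1\alpha+v_2\beta)$ and checking $\rho_\Theta(\gamma)=P^{-1}R(\gamma)P$ directly from the equivariance relations \eqref{1}--\eqref{22} and $[\alpha,\beta]=0$. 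This is more elementary and more constructive: it needs neither the $S^1$-action nor character theory, works uniformly for every $\Theta\in X_\zeta$, and as a byproduct verifies that $\rho_\Theta$ is a homomorphism. It also dovetails nicely with Section~6, where one-parameter families of change-of-basis matrices $P(t)$ are the central objects; your formula gives a canonical such $P$ at every point (up to the inverse convention used there, and up to the usual ambiguity by elements of the commutant of $R$). The paper's approach, on the other hand, fits the Morse-theoretic narrative and highlights the role of the quiver combinatorics at the fixed points.
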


We postpone the proof of Proposition \ref{prop} to the end of Section 5 after we introduce an $S^1$-action on the moduli space $X_\zeta$.

\section{An $S^1$-invariant Morse-Bott function on the moduli space and its critical points}

In this section, we introduce an $S^1$-action on the configuration space that descends to the moduli space $X_\zeta$ and analyze an $S^1$-invariant Morse-Bott function on the moduli space arising from this action. For references related to these topics, see \cite{kirwan} and \cite{fisher}. The $S^1$-action we consider is as follows: $$e^{i\varphi}: (A_0+B,\Theta)\mapsto (A_0+B, e^{i\varphi}\Theta).$$ This $S^1$-action is similar to the one considered by Hitchin. It is the natural $S^1$-action to consider in our setting as equations (2.3)--(2.6) are invariant under this action once we set $\zeta_2=\zeta_3=0.$ 

This $S^1$-action is Hamiltonian and hence gives rise to a Morse-Bott functional on the configuration space as the norm square of the moment map. We  write down a Morse-Bott functional on the configuration space as follows: $$\Phi(A_0+B,\Theta)=\int_{S^2/\Gamma}\lVert \Theta\rVert^2 \omega_{vol}.$$ We observe that this Morse-Bott functional is $S^1$-invariant; restricted on the set of solutions to the equations defining the moduli space, and up to gauge transformations, its critical points are given by the following equations: 

\begin{equation}  \rho(\varphi)(A_0+B)\rho(\varphi)^{-1}=A_0+B, \end{equation} 
\begin{equation} \rho(\varphi)\Theta\rho(\varphi)^{-1}=e^{i\varphi}\Theta, \forall \varphi. \end{equation}

Here, $(A_0+B,\Theta)$ is a solution, and $\{\rho(\varphi)\}$ is an $S^1$-subgroup of gauge transformations in $\G^\F_\tau$. Notice that since $\rho(\varphi)$ acts trivially on $A_0$, equation (5.1) reduces to \begin{equation}  \rho(\varphi)B\rho(\varphi)^{-1}=B. \end{equation} 
Furthermore, for $(A_0+B,\Theta)$ to be a solution, $B$ must be gauge-equivalent to $0$. We have that the based gauge group $\G^\F_{\tau,0}$ acts freely on $B$ and the stabilizer of $B$ is isomorphic to $F$. Hence, equations (5.1) and (5.2) can be reduced to the following:
\begin{equation}  f(\varphi)\Theta f(\varphi)^{-1}=e^{i\varphi}\Theta, \forall \varphi, \end{equation} where $\{f(\varphi)\}$ is an $S^1$-subgroup of $F$, and $\Theta$ is a holomorphic section of $E(\Gamma)$ with respect to $A_0$. In other words, after identifying $\Theta$ with $(\alpha,\beta)$, equation (5.4) becomes $$(f(\varphi)\alpha f(\varphi)^{-1},f(\varphi)\beta f(\varphi)^{-1})=(e^{i\varphi}\alpha,e^{i\varphi}\beta), \forall \varphi.$$

\begin{lem}
For a holomorphic sections $\Theta=(\alpha,\beta)$, $\Phi(\Theta)=\int_{S^2/\Gamma}\lVert \Theta\rVert^2 \omega_{vol}=k_0(\lVert \alpha\rVert^2 +\lVert \beta\rVert^2)$, for some constant $k_0$.
\end{lem}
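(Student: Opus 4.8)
The plan is to make the pointwise norm $\lVert\Theta\rVert^2$ completely explicit and then reduce the orbifold integral to an elementary integral over $\C P^1$ that is evaluated by symmetry. First I would recover the equivariant map underlying the holomorphic section: since $A_\Theta=d+d\Theta$ and, for $\Theta=(\alpha,\beta)$, $A_\Theta=d+\alpha\,dz_1+\beta\,dz_2$, the defining map $\lambda\colon S^3\to End(R)$ satisfies $d\lambda=\alpha\,dz_1+\beta\,dz_2$, so $\lambda(z_1,z_2)=\alpha z_1+\beta z_2$, the integration constant being forced to vanish by the $S^1$-equivariance (weight $-1$) of a section. Representing $\Theta$ at $x=[z_1:z_2]$ by $[p,\lambda(p)]$ with $p=(z_1,z_2)\in h^{-1}(x)\subset S^3$ normalized so that $|z_1|^2+|z_2|^2=1$, the fiber metric induced by the $\Gamma$-invariant hermitian form $h_V$ on $End(R)$, namely the Hilbert--Schmidt form $\langle M,N\rangle=\tr(MN^*)$, gives
$$\lVert\Theta(x)\rVert^2=\lVert\alpha z_1+\beta z_2\rVert^2=|z_1|^2\lVert\alpha\rVert^2+|z_2|^2\lVert\beta\rVert^2+z_1\bar z_2\,\tr(\alpha\beta^*)+\bar z_1 z_2\,\tr(\beta\alpha^*).$$
I would check this descends to $\C P^1$: replacing $p$ by $pe^{i\theta}$ sends $\lambda$ to $e^{-i\theta}\lambda$, and $h_V$ is invariant under the scalar action $M\mapsto e^{i\theta}M$, so the value is independent of the chosen lift.

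Next I would pass from $S^2/\Gamma$ to $\C P^1$. Because $h_V$ is $\Gamma$-invariant and $\lambda$ is $\Gamma$-equivariant — conditions \eqref{1}--\eqref{22} are exactly the identity $\lambda(p\gamma)=R(\gamma)^{-1}\lambda(p)R(\gamma)$, and conjugation by the unitary $R(\gamma)$ preserves the trace norm — the function $\lVert\Theta\rVert^2$ is a genuine $\Gamma$-invariant function on $S^2$. Orbifold integration then yields
$$\int_{S^2/\Gamma}\lVert\Theta\rVert^2\,\omega_{vol}=\frac{1}{|\Gamma|}\int_{\C P^1}\lVert\Theta\rVert^2\,\omega_{vol},$$
the orbifold locus being a set of measure zero.

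Finally I would evaluate the three types of terms using the isometries of $(\C P^1,\omega_{vol})$. The swap $[z_1:z_2]\mapsto[z_2:z_1]$ interchanges $|z_1|^2$ and $|z_2|^2$ subject to $|z_1|^2+|z_2|^2=1$, forcing $\int_{\C P^1}|z_1|^2\,\omega_{vol}=\int_{\C P^1}|z_2|^2\,\omega_{vol}=\tfrac12\int_{\C P^1}\omega_{vol}$. The rotation $[z_1:z_2]\mapsto[e^{i\psi}z_1:z_2]$ multiplies the (diagonally $S^1$-invariant, hence well-defined) function $z_1\bar z_2$ by $e^{i\psi}$ while preserving $\omega_{vol}$, so $\int_{\C P^1}z_1\bar z_2\,\omega_{vol}=0$, and likewise for $\bar z_1 z_2$. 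Collecting the surviving terms gives
$$\int_{S^2/\Gamma}\lVert\Theta\rVert^2\,\omega_{vol}=\frac{1}{2|\Gamma|}\Big(\int_{\C P^1}\omega_{vol}\Big)\big(\lVert\alpha\rVert^2+\lVert\beta\rVert^2\big),$$
which is the claim with $k_0=\tfrac{1}{2|\Gamma|}\int_{\C P^1}\omega_{vol}$.

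The symmetry evaluation of the Fubini--Study integrals is immediate; the step that requires the most care is pinning down the pointwise norm. Concretely, one must verify that the representative of the holomorphic section is exactly $\lambda(z_1,z_2)=\alpha z_1+\beta z_2$, that $h_V$ is the scalar-$S^1$-invariant trace form so that it descends to a genuine fiber metric on $E(\Gamma)$, and that the $\Gamma$-equivariance encoded in \eqref{1}--\eqref{22} makes $\lVert\Theta\rVert^2$ a well-defined function on $S^2/\Gamma$. Once these normalizations are fixed, the remainder is a routine computation.
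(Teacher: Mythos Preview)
Your proposal is correct and follows essentially the same route as the paper: identify the section as $\lambda(z_1,z_2)=\alpha z_1+\beta z_2$, expand the norm, and use symmetry to see that the cross terms vanish while the $|z_1|^2$ and $|z_2|^2$ integrals coincide. The only cosmetic difference is that the paper lifts the integral from $S^2/\Gamma$ to $S^3/\Gamma$ along the circle fibration (dividing by $\mathrm{Vol}(S^1)$), whereas you unfold the $\Gamma$-quotient to work on $\C P^1$ (dividing by $|\Gamma|$); your version is in fact more carefully justified, since you make explicit why each term descends and why the symmetry arguments apply.
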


\begin{proof}
For $p=(z_1,z_2)\in S^3$, we have $\Theta(h(p))=\alpha z_1+\beta z_2$. Hence, we have \begin{align}f(\Theta)&= \frac{1}{\text{Vol}(S^1)}  \int_{S^3/\Gamma} \| \alpha z_1 + \beta z_2 \|^2 \, \omega_{\text{vol}_{S^3/\Gamma}} \\&=\frac{1}{\text{Vol}(S^1)} ( \|\alpha\|^2 \int_{S^3/\Gamma} |z_1|^2 + \|\beta\|^2 \int_{S^3/\Gamma} |z_2|^2+\langle \alpha, \beta \rangle\int_{S^3/\Gamma} z_1 \bar{z}_2+ \langle \beta, \alpha \rangle
\int_{S^3/\Gamma} z_2 \bar{z}_1 )\\&=k_0(\lVert \alpha\rVert^2 +\lVert \beta\rVert^2). \end{align}

\end{proof}
\begin{lem}
Let $\zeta=(\zeta_1,0,0)$ be an element in the good set. The functional $\Phi$ descends to a perfect Morse-Bott function on the moduli space $X_\zeta$. 
\end{lem}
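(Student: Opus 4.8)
The plan is to recognize $\Phi$ as an affine reparametrization of the moment map for the residual Hamiltonian $S^1$-action on the Kähler manifold $(X_\zeta,\omega_1)$, and then to run the classical Frankel--Atiyah--Bott argument, upgraded to handle the non-compactness of the ALE space. First I would record that the action $e^{i\varphi}\cdot(A_0+B,\Theta)=(A_0+B,e^{i\varphi}\Theta)$ genuinely descends to $X_\zeta$: it commutes with $\G^{F,\Gamma}_\tau$, and since $\zeta_2=\zeta_3=0$ it preserves the level set cutting out $X_\zeta$ (it rotates the pair $(\mu_2,\mu_3)$ while fixing $\mu_1$ and $\omega_1$). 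A one-line computation with $\omega_1(\Theta_1,\Theta_2)=\int_{S^2/\Gamma}-\mathrm{Im}\langle\Theta_1,\Theta_2\rangle\omega_{vol}$ and the generating field $V=i\Theta$ gives $\iota_V\omega_1=-\tfrac12\,d\Phi$ (up to the usual sign convention), so $\Phi$ is the moment map up to an affine change.

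Next I would establish the Morse-Bott property, which is entirely local and hence insensitive to non-compactness. The critical set is $\{d\Phi=0\}=\{\iota_V\omega_1=0\}=\{V=0\}=\mathrm{Fix}(S^1)$, a disjoint union of closed Kähler submanifolds; these are exactly the components $S_1,\dots,S_r$ solving the reduced equation (5.4). Near each component $C$ I would linearize the action (equivariant Darboux/Bochner), splitting the normal bundle into complex weight subbundles $\nu_C=\bigoplus_{k\neq0}\nu_C^{(k)}$. The Hessian of the moment map is then nondegenerate on $\nu_C$, negative-definite precisely on the negative-weight part, so $C$ is a nondegenerate critical submanifold of \emph{even} index $\lambda_C=\dim_\R\bigoplus_{k<0}\nu_C^{(k)}$. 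For the global theory I would verify that $\Phi\ge0$ is proper — its sublevel sets are compact because $\|\Theta\|^2\to\infty$ along the ALE end — so the negative gradient flow is complete and the Morse-Bott inequalities $\sum_C t^{\lambda_C}P_t(C)-P_t(X_\zeta)=(1+t)R(t)$ hold with $R(t)$ having non-negative coefficients.

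Perfection would then follow from the lacunary principle. On one side, $X_\zeta$ deformation-retracts onto the exceptional divisor $D$, a configuration of $(-2)$-curves, so $P_t(X_\zeta)=1+rt^2$ has only even powers. On the other side, each fixed component $C$ is compact, hence — since $X_\zeta\setminus D\cong(\C^2/\Gamma)\setminus\{0\}$ contains no compact curves — is either a point or a single exceptional $\C P^1$; in both cases $P_t(C)\in\{1,\,1+t^2\}$ is even, and combined with all $\lambda_C$ even this makes $\sum_C t^{\lambda_C}P_t(C)$ a polynomial in even powers only. Therefore $(1+t)R(t)$ has only even powers, which for a polynomial $R$ with non-negative coefficients forces $R\equiv0$; the Morse-Bott inequalities are equalities, so $\Phi$ is perfect.

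I expect the main obstacle to be the geometric input feeding the lacunary step: identifying $\mathrm{Fix}(S^1)$ explicitly and confirming that every fixed component is a point or a rational curve, together with the assurance that no topology escapes through the ALE end (properness and completeness of the flow). The "no compact curves away from $D$" argument settles the rational-curve claim cleanly, but enumerating the solutions of $f(\varphi)\Theta f(\varphi)^{-1}=e^{i\varphi}\Theta$ is precisely the fixed-point computation that is tractable in the cyclic cases of Section 6 and becomes delicate for non-abelian $\Gamma$.
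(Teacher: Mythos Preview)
Your argument is correct, but it takes a different route from the paper. The paper's proof is a two-line appeal to a general black box: the scaling $S^1$-action visibly extends to the holomorphic $\C^*$-action $z\cdot(\alpha,\beta)=(z\alpha,z\beta)$, the moment map $\Phi$ is proper, and one then invokes the standard theorem (Frankel/Kirwan, cf.\ the references \cite{kirwan}, \cite{fisher}) that a proper moment map for a Hamiltonian $S^1$-action extending holomorphically to $\C^*$ on a K\"ahler manifold is automatically a perfect Morse--Bott function. You instead unpack this black box by hand: you verify the Morse--Bott property via the weight decomposition of the normal bundle, extract even indices from the complex structure, and then close with the lacunary principle, for which you need the additional geometric input that every fixed component is a point or a rational curve (obtained via the observation that compact curves in the ALE space must lie in the exceptional divisor). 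The paper's approach is shorter and requires no knowledge of the fixed locus whatsoever---the $\C^*$-extension does all the work---whereas yours is more self-contained and makes the mechanism of perfection transparent, at the cost of importing facts about the exceptional divisor that the paper does not need at this stage. Note, incidentally, that your invocation of $P_t(X_\zeta)=1+rt^2$ is unnecessary: once the Morse polynomial has only even powers, the identity $M(t)-P(t)=(1+t)R(t)$ with $R\ge 0$ already forces $R=0$ by looking at odd-degree coefficients, without assuming anything about $P(t)$.
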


\begin{proof}
It suffices to show that the scaling $S^1$-action is proper and extends to a holomorphic $\C^*$-action on the moduli space, which guarantees perfectness. 

Since $S^1$ is compact and the action is Hamiltonian with the Morse-Bott function given by the norm square of the moment map, properness follows. As we use the scaling $S^1$-action, we can extend the action to the following complex linear action, which is holomorphic: $$z\cdot(\alpha,\beta)=(z\alpha,z\beta), z\in\C^*.$$ As a result, we get perfectness.

\end{proof}

We proceed to analyze the gradient flow lines of $\Phi$ on $X_\zeta$. On the configuration space, the gradient of $\Phi$ at $(\alpha,\beta)$ is simply $$\nabla \Phi(\alpha,\beta)=\nabla(\|\alpha\|^2+\|\beta\|^2)=\nabla(\alpha\alpha^*+\beta\beta^*)=(2\alpha,2\beta).$$ In the quotient under gauge equivalence, the gradient of $\Phi$ is given by $$\nabla \Phi(\alpha,\beta)=(2\alpha+[\xi,\alpha],2\beta+[\xi,\beta]),$$ for some $\xi\in \f/\ttt$. Hence,  a gradient flow line $(\alpha(t),\beta(t))$ in $X_\zeta$ is defined by the following equations: \begin{equation}2\alpha(t)+[\xi(t),\alpha(t)]=\dot{\alpha}(t), \end{equation}
\begin{equation}2\beta(t)+[\xi(t),\beta(t)]= \dot{\beta}(t), \end{equation}
\begin{equation} [\alpha(t),\alpha(t)^*]+[\beta(t),\beta(t)^*]=\zeta_1, \end{equation}
\begin{equation} [\alpha(t),\beta(t)]=0.\end{equation}

\begin{rmk}
Note that in (5.7), for simplicity, we omit the constant $\frac{i}{2}$ present in $\mu_1(\alpha,\beta)=\frac{i}{2}([\alpha,\alpha^*]+[\beta,\beta^*]),$ which implies that $\zeta_1$ in (5.7) is a real-valued matrix instead. We will keep this convention going forward. 
\end{rmk}

\begin{lem}
Let $(\alpha,\beta)$ be a critical point and suppose $(\alpha(t),\beta(t))$ is a gradient flow line into $(\alpha,\beta)$, as $t \to -\infty$. Then $(e^{i\varphi}\alpha(t),e^{i\varphi}\beta(t))$ is a gradient flow line into $(\alpha,\beta)$, as $t \to -\infty$ , for all $\varphi$. \end{lem}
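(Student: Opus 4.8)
The plan is to exploit the fact that the scaling factor $e^{i\varphi}$ enters everywhere as a central scalar: it commutes with every matrix commutator and satisfies $e^{i\varphi}\overline{e^{i\varphi}}=1$. This makes the defining equations (5.5)--(5.8) manifestly equivariant, so the $S^1$-action carries gradient flow lines to gradient flow lines, and the claim reduces to identifying the limit. Concretely, I would split the argument into two steps: first checking that the rescaled curve solves the flow equations with the \emph{same} multiplier $\xi(t)$, and then checking that its limit is again $(\alpha,\beta)$ because the critical point is an $S^1$-fixed point of $X_\zeta$.

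First I would verify the flow equations directly. Write $\alpha'(t)=e^{i\varphi}\alpha(t)$ and $\beta'(t)=e^{i\varphi}\beta(t)$, so $\dot\alpha'(t)=e^{i\varphi}\dot\alpha(t)$. Substituting (5.5) and using that $e^{i\varphi}$ is a scalar, $e^{i\varphi}[\xi(t),\alpha(t)]=[\xi(t),e^{i\varphi}\alpha(t)]=[\xi(t),\alpha'(t)]$, so that $\dot\alpha'(t)=2\alpha'(t)+[\xi(t),\alpha'(t)]$, and identically for $\beta'$ from (5.6). For the moment map equation (5.7) the phases cancel, since $[\alpha',(\alpha')^*]=e^{i\varphi}e^{-i\varphi}[\alpha,\alpha^*]=[\alpha,\alpha^*]$ and likewise for $\beta$, so (5.7) is preserved; and (5.8) survives because $[\alpha',\beta']=e^{2i\varphi}[\alpha,\beta]=0$. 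Thus $(\alpha'(t),\beta'(t))$ is a genuine gradient flow line, with the \emph{unchanged} multiplier $\xi'(t)=\xi(t)\in\f/\ttt$.

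Second I would identify the limit. Since $(\alpha,\beta)$ is a critical point, it is an $S^1$-fixed point in the moduli space: equation (5.4) supplies an $f(\varphi)\in F$ with $f(\varphi)\alpha f(\varphi)^{-1}=e^{i\varphi}\alpha$ and $f(\varphi)\beta f(\varphi)^{-1}=e^{i\varphi}\beta$, so $e^{i\varphi}\cdot(\alpha,\beta)$ is gauge-equivalent to $(\alpha,\beta)$ and represents the same point of $X_\zeta$. Because the scaling $S^1$-action on $X_\zeta$ is continuous, it commutes with taking limits, giving $\lim_{t\to-\infty}(\alpha'(t),\beta'(t))=e^{i\varphi}\cdot\lim_{t\to-\infty}(\alpha(t),\beta(t))=e^{i\varphi}\cdot(\alpha,\beta)=(\alpha,\beta)$ in $X_\zeta$. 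Combining the two steps shows that $(e^{i\varphi}\alpha(t),e^{i\varphi}\beta(t))$ is a gradient flow line converging to $(\alpha,\beta)$ as $t\to-\infty$.

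The only real subtlety, and the step I would treat most carefully, is the bookkeeping around the gauge multiplier $\xi(t)$ and the fixed-point identification: one must confirm that the same $\xi(t)$ is admissible for the rescaled curve and that the fixed-point relation (5.4) is being read in the quotient $X_\zeta$ rather than only on the configuration space. Both are immediate because $e^{i\varphi}$ is a central scalar that does not interact with the gauge group $F$, but it is worth recording explicitly so that the limit is genuinely taken in the moduli space. Conceptually, all of this simply reflects that $\Phi$ and the hyperkähler metric are $S^1$-invariant, so the gradient vector field is $S^1$-equivariant; the direct computation above is the concrete manifestation of that invariance.
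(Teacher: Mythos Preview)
Your proof is correct and follows essentially the same approach as the paper: first verify that the rescaled curve satisfies the gradient flow equations (the paper simply declares this ``clear'' from (5.9)--(5.13), whereas you spell out the scalar-commutation and phase-cancellation checks), and then use the $S^1$-fixed-point property (5.4) to conclude that the limit $(e^{i\varphi}\alpha,e^{i\varphi}\beta)$ equals $(\alpha,\beta)$ in $X_\zeta$. Your version is more explicit but structurally identical.
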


\begin{proof}
Since $(\alpha(t),\beta(t))$ is a gradient flow line into $(\alpha,\beta)$, we have the following equations satisfied for all $t$:
\begin{equation}2\alpha(t)+[\xi(t),\alpha(t)]=\dot{\alpha}(t), \end{equation}
\begin{equation}2\beta(t)+[\xi(t),\beta(t)]= \dot{\beta}(t), \end{equation}
\begin{equation} [\alpha(t),\alpha(t)^*]+[\beta(t),\beta(t)^*]=\zeta_1, \end{equation}
\begin{equation} [\alpha(t),\beta(t)]=0,\end{equation}
\begin{equation}\lim_{t\to-\infty}(\alpha(t),\beta(t))=(\alpha,\beta).\end{equation}
It's clear that from the above equations, $(e^{i\varphi}\alpha(t),e^{i\varphi}\beta(t))$ is another gradient flow line. Since $(\alpha,\beta)$ is a critical point, it is an $S^1$-fixed point up to gauge equivalence, that is, $(e^{i\varphi}\alpha,e^{i\varphi}\beta)=(f(\varphi)\alpha f(\varphi)^{-1},f(\varphi)\beta f(\varphi)^{-1})$. We have $$\lim_{t\to-\infty}(e^{i\varphi}\alpha(t),e^{i\varphi}\beta(t))=(e^{i\varphi}\alpha,e^{i\varphi}\beta)=(f(\varphi)\alpha f(\varphi)^{-1},f(\varphi)\beta f(\varphi)^{-1}).$$ Hence, $(e^{i\varphi}\alpha(t),e^{i\varphi}\beta(t))$ is a gradient flow line into $(\alpha,\beta)$ in the moduli space $X_\zeta$.
\end{proof}

\subsection{ $S^1$-fixed points of $A_n$-type ALE spaces}

In this subsection, we specialize to the $A_n$-type ALE spaces, that is, $\widetilde{\C^2/\Gamma}$ for a cyclic subgroup $\Gamma$ of $SU(2)$. We give a detailed description of the $S^1$-fixed points of this type of ALE spaces with respect to the scaling $S^1$-action $$e^{i\varphi}\cdot(\alpha,\beta)=(e^{i\varphi}\alpha,e^{i\varphi}\beta).$$ 

To start, we recall from (\ref{1}) and (\ref{22}), for $(\alpha,\beta)\in M$,  and  $\gamma=\begin{pmatrix}
u & 0 \\
0 & u^*  
\end{pmatrix}\in \Gamma,$  we have 
\begin{equation}R(\gamma^{-1})\alpha R(\gamma)=u\alpha, \end{equation}  \begin{equation}R(\gamma^{-1})\beta R(\gamma)=u^*\beta.\end{equation}

It's not hard to see that combinatorially, such a pair $(\alpha,\beta)$ is represented by the following diagram: \begin{center}
\includegraphics[width=0.5\textwidth]{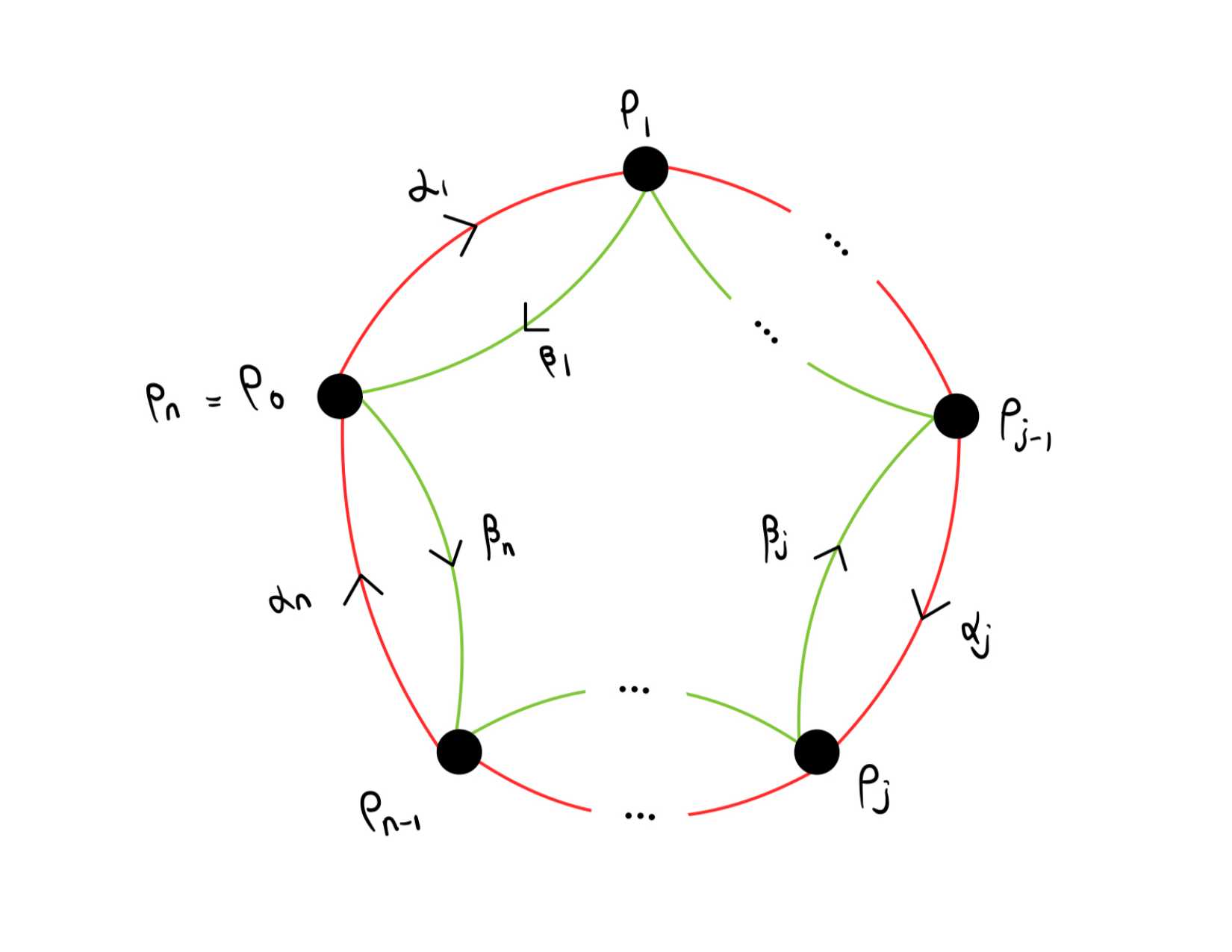}
\captionof{figure}{$(\alpha,\beta)=((\alpha_1,...,\alpha_n),(\beta_1,...,\beta_n))$}
\label{fig:myfigure}
\end{center} This diagram is also known as the extended $A_n$-type Dynkin diagram. Here, the vertices correspond to the irreducible representations of $\Gamma$, denoted by $\rho_j$, where $j$ is an index in $\Z_n$. We have $$(\alpha,\beta)=((\alpha_1,...,\alpha_n),(\beta_1,...,\beta_n)),$$ where $\alpha_j$, $\beta_j$ are the edge maps connecting vertices $\rho_{j-1}$ and $\rho_j$. We let $\alpha_j^*$, $\beta_j^*$ denote the conjugate transposes of $\alpha_j$ and $\beta_j$; note that $\alpha_j^*$, $\beta_j^*$ reverse the arrows of  $\alpha_j$ and $\beta_j$.

To describe the $S^1$-fixed points for $\widetilde{\C^2/\Gamma}$, recall that an $S^1$-fixed point is a pair of $(\alpha,\beta)$ such that $$(e^{i\varphi}\alpha,e^{i\varphi}\beta)=(f(\varphi)\alpha f(\varphi)^{-1},f(\varphi)\beta f(\varphi)^{-1}),$$ where $f(\varphi)=(f_1(\varphi),...,f_n(\varphi))\in F$.  \begin{center}
\includegraphics[width=0.65\textwidth]{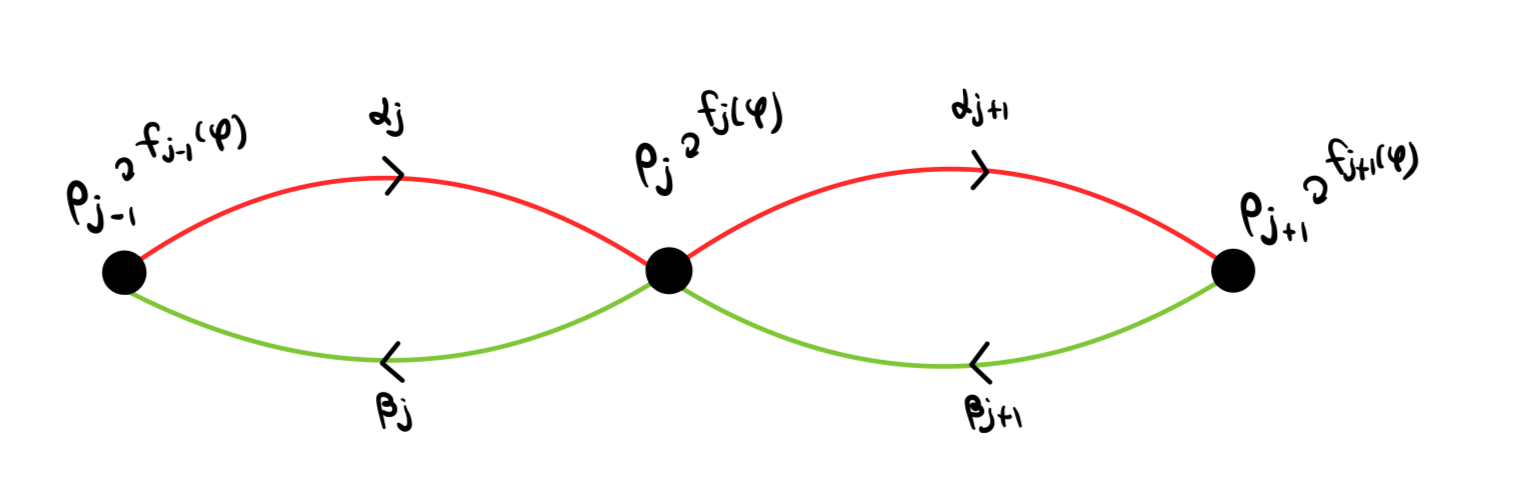}
\captionof{figure}{$f(\varphi)$-action}
\label{fig:1}
\end{center}

From the above diagram, we first observe that $\alpha_j$ and $\beta_j$ cannot both be nonzero, as we always have $$f(\varphi)\alpha_j f(\varphi)^{-1}=f_{j-1}(\varphi)f_{j}^{-1}(\varphi)\alpha_j=e^{w_ji\varphi}\alpha_j,$$$$f(\varphi)\beta_j f(\varphi)^{-1}=f_{j-1}^{-1}(\varphi)f_{j}(\varphi)\beta_j=e^{-w_ji\varphi}\beta_j,$$ that is, $S^1$ acts on $\alpha_j$ and $\beta_j$ with opposite weights $w_j$ and $-w_j$ due to the opposite directions of the arrows. Since the opposite weights cannot both be equal to $1$, we must have at least one of $\alpha_j$ and $\beta_j$ equal to $0$ at an $S^1$-fixed point, for all $j$.

On the other hand, we have that $$[\alpha,\alpha^*]+[\beta,\beta^*]=\zeta_1,$$ $$[\alpha,\beta]=0.$$ These equations implies that \begin{equation} \alpha_j\beta_j = \alpha_k\beta_k, \forall j,k \end{equation}

\begin{equation}
\alpha_{j+1}\alpha_{j+1}^* -\beta_{j+1}\beta_{j+1}^*+\beta_j\beta_j^*-\alpha_j\alpha_j^*=\zeta_1^j, \forall j.
\end{equation}
We remark that there is a slight abuse of notations: $\alpha_j$, $\alpha_j^*$, $\beta_j$, $\beta_j^*$ above refer to the numerical values of the respective maps, but the directions of the maps are suppressed. Notice that when $(\alpha,\beta)$ is an $S^1$-fixed point, we always have $\alpha_j\beta_j=0$, for all $j$. For simplicity, we assume that $\zeta_1=(\zeta_1^0=\zeta_1^n,...,\zeta_1^{n-1})$, with  $$0<\zeta_1^0=\zeta_1^{n}<\zeta_1^1<...<\zeta_1^{n-2},$$ and $$\zeta_1^{n-1}=-\sum_{j=0}^{n-2}\zeta_1^j.$$ 

\subsubsection{$n=2m+1$:}
 
 When $\Gamma$ is an odd cyclic subgroup of $SU(2)$ with $|\Gamma|=n=2m+1$, $\widetilde{\C^2/\Gamma}$ has $n$ isolated $S^1$-fixed points indexed by $j$, with the property that $\alpha_j=\beta_j=0$. It's evident to see that the arrow directions and the edge values are completely determined up to gauge equivalence, once $j$ is fixed. The reason for the existence of some $j$ such that both $\alpha_j$ and $\beta_j$ are $0$ is that the restriction of an $S^1$-fixed point. Indeed, in order to be an $S^1$-fixed point, we must have  $$f(\varphi)\alpha_jf(\varphi)^{-1}= f_{j-1}(\varphi)f_{j}^{-1}(\varphi)\alpha_j=e^{i\varphi}\alpha_j,$$$$f(\varphi)\beta_jf(\varphi)^{-1}=f_{j-1}^{-1}(\varphi)f_{j}(\varphi)\beta_j=e^{i\varphi}\beta_j,$$ for all $j$. This condition forces $f(\varphi)$ to act by weights differing by $1$ at adjacent vertices with the arrow pointing to the lower weight. We see that if the number of vertices is odd, for this condition to hold at all vertices, there must be some j such that $\alpha_j=\beta_j=0.$ On the other hand, there can be at most such a $j$ since in order for (5.17) to hold. It's not hard to see that there is a unique global minimal which is given by the critical point with the same number of $\alpha$-edges and $\beta$-edges, and hence, it must be the unique index-$0$ critical point. Since $\Phi$ is perfect, the rest of the critical points must all be of index-$2$.

\begin{figure}[htbp]
  \centering
  \begin{subfigure}{0.3\textwidth}
    \centering
    \includegraphics[width=\linewidth]{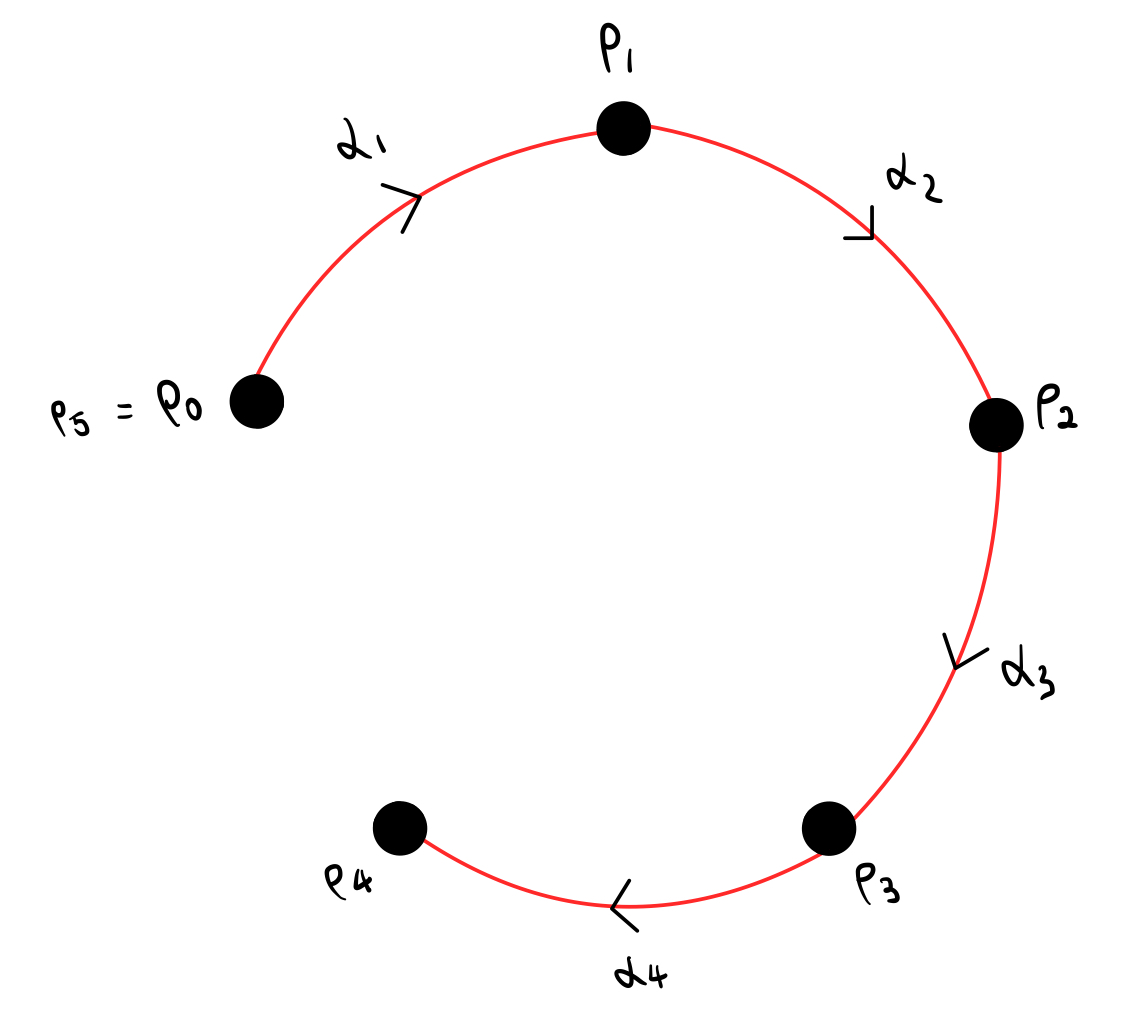}
    \caption{index-$2$ critical point}
    \label{fig:2}
  \end{subfigure}
  \begin{subfigure}{0.3\textwidth}
    \centering
    \includegraphics[width=\linewidth]{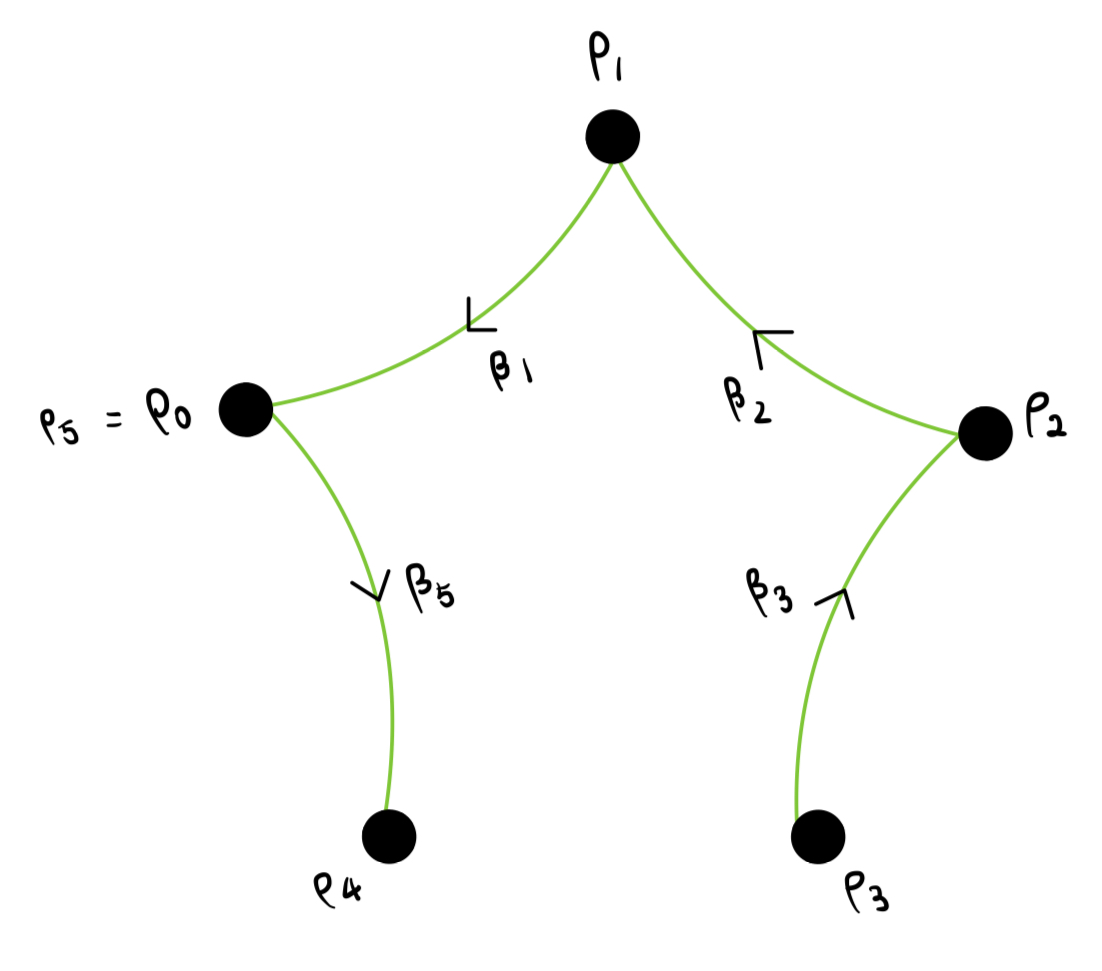}
    \caption{index-$2$  critical point }
    \label{fig:3}
  \end{subfigure}
  \caption{$\Z_5$: $S^1$-fixed points / critical points}
  \label{fig:4}
\end{figure}

  \begin{figure}[htbp]
  \centering
  \begin{subfigure}{0.3\textwidth}
    \centering
    \includegraphics[width=\linewidth]{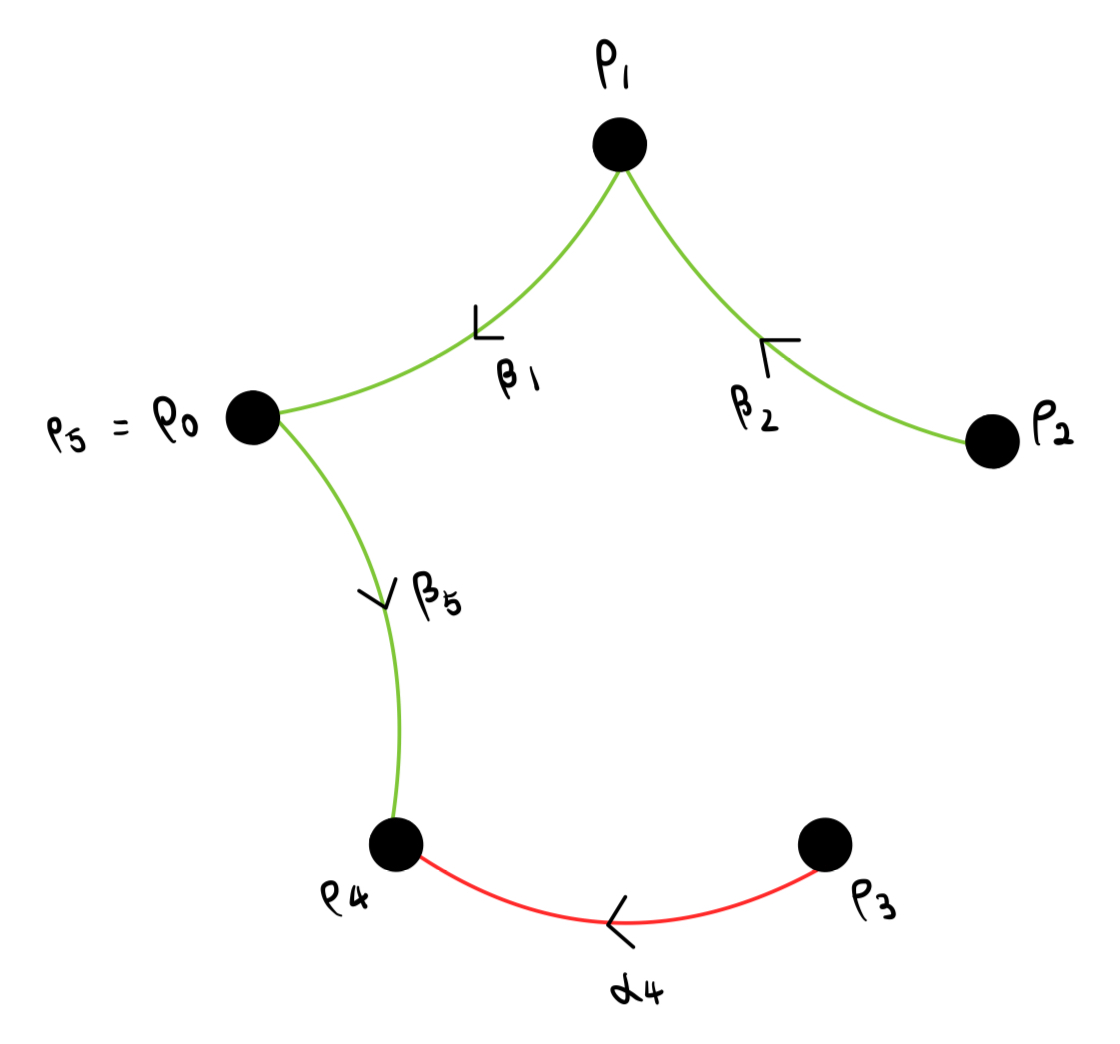}
    \caption{index-$2$  critical point }
    \label{fig:5}
  \end{subfigure}
   \begin{subfigure}{0.3\textwidth}
   
    \centering
    \includegraphics[width=\linewidth]{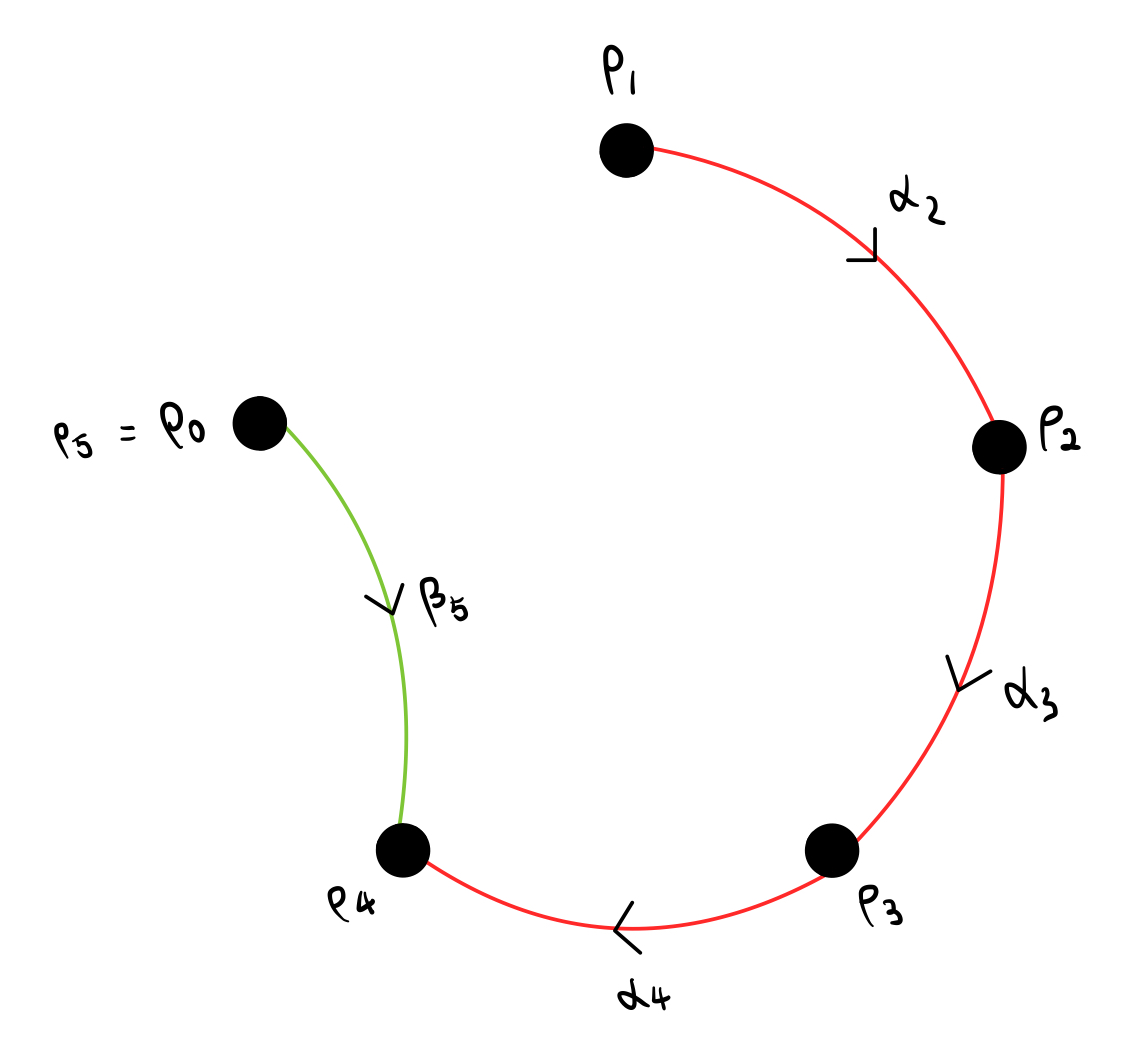}
    \caption{index-$2$  critical point }
    \label{fig:6}
  \end{subfigure}

  \begin{subfigure}{0.3\textwidth}
    \centering
    \includegraphics[width=\linewidth]{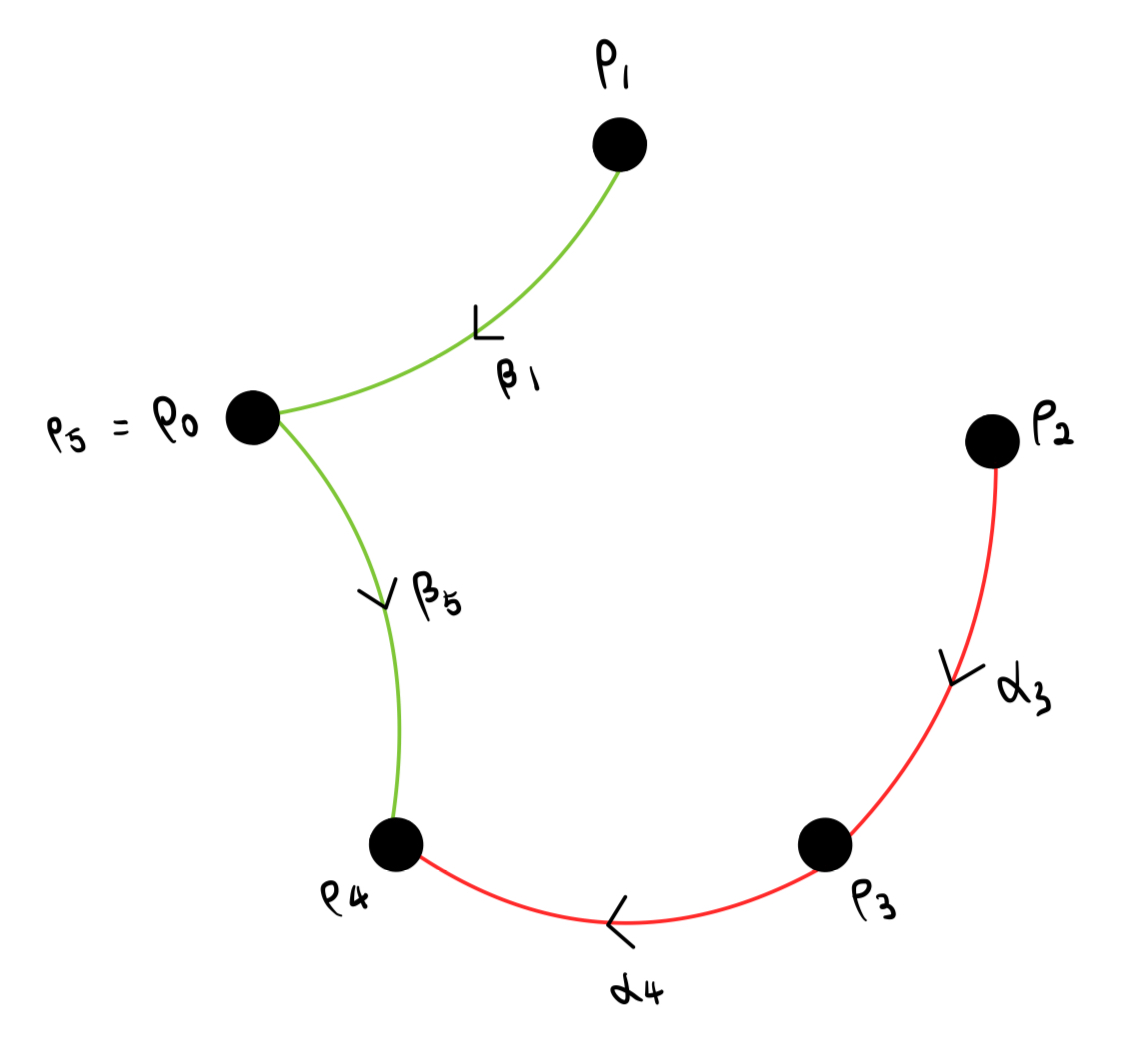}
    \caption{index-$0$  critical point }
    \label{fig:7}
  \end{subfigure}

    \caption{$\Z_5$: $S^1$-fixed points / critical points}
  \label{fig:8}
\end{figure}

\subsubsection{$n=2m$:}

When $\Gamma$ is an even cyclic subgroup of $SU(2)$ of order $|\Gamma|=n=2m$, $\widetilde{\C^2/\Gamma}$ has $n-2$ isolated index-$2$ critical points and a single component of index-$0$ critical submanifold diffeomorphic to a sphere. The index-$0$ critical submanifold is given by the diagram with $m$ $\alpha$-edges and $m$ $\beta$-edges, and there is a sphere of such $S^1$-fixed points up to gauge equivalence with the $\alpha$- and $\beta$-edges taking different norms. It's straightforward to see that $\Phi$ achieves global minimal on this sphere so it is of index-$0$. The rest of the index-$2$ critical points arise in a similar fashion as the odd cyclic case. 

 \begin{figure}[htbp]
  \centering
 
   \begin{subfigure}{0.3\textwidth}
   
    \centering
    \includegraphics[width=\linewidth]{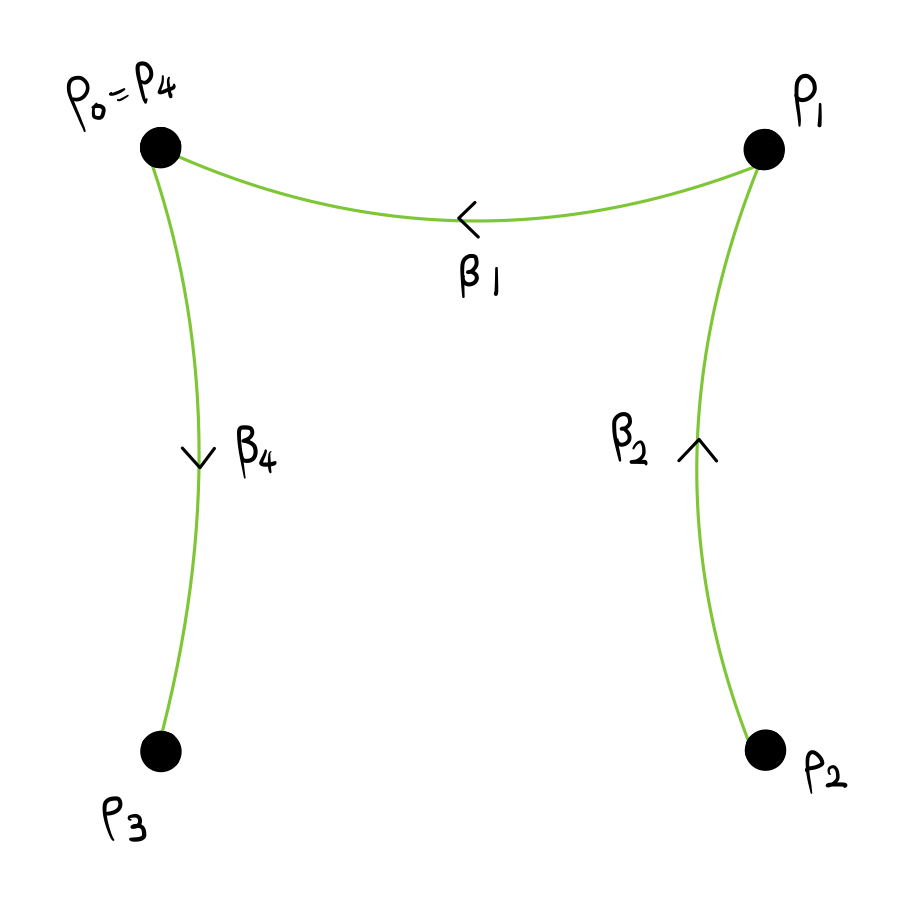}
    \caption{index-$2$ critical point}
    \label{fig:9}
  \end{subfigure}
  \begin{subfigure}{0.3\textwidth}
    \centering
    \includegraphics[width=\linewidth]{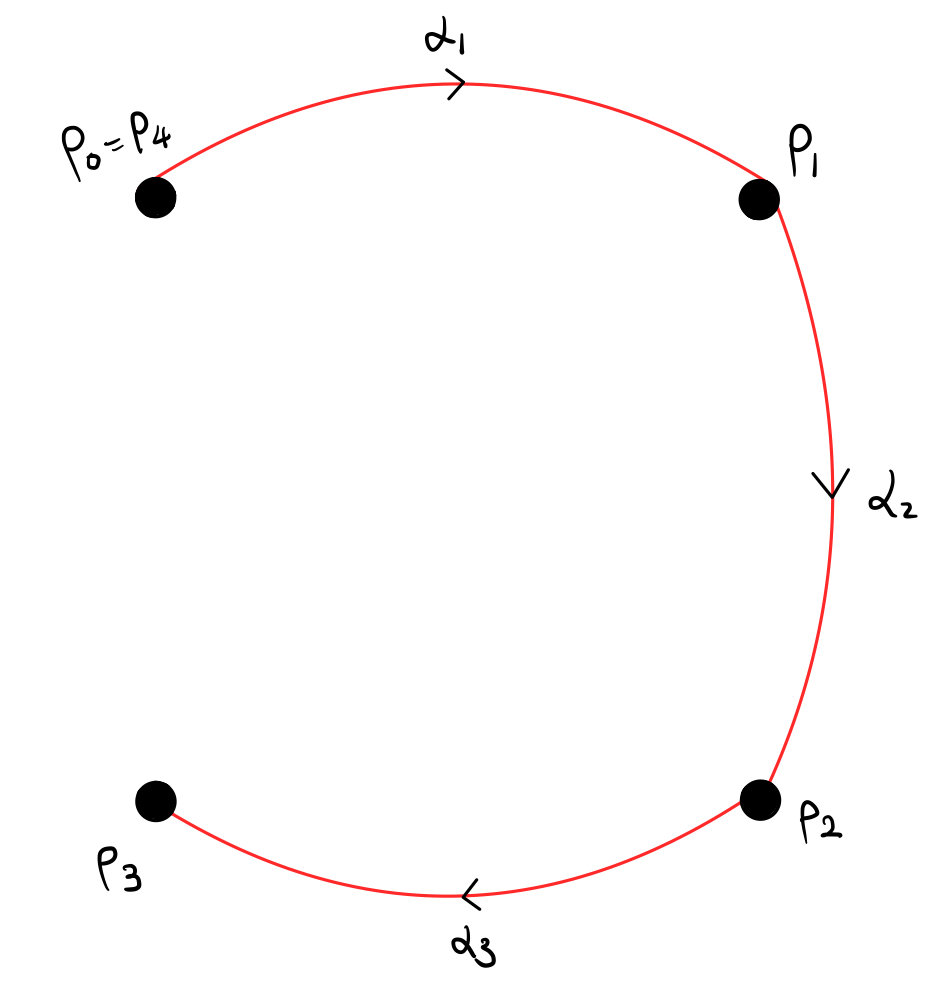}
    \caption{index-$2$ critical point}
    \label{fig:10}
  \end{subfigure}
 
 \begin{subfigure}{0.3\textwidth}
    \centering
    \includegraphics[width=\linewidth]{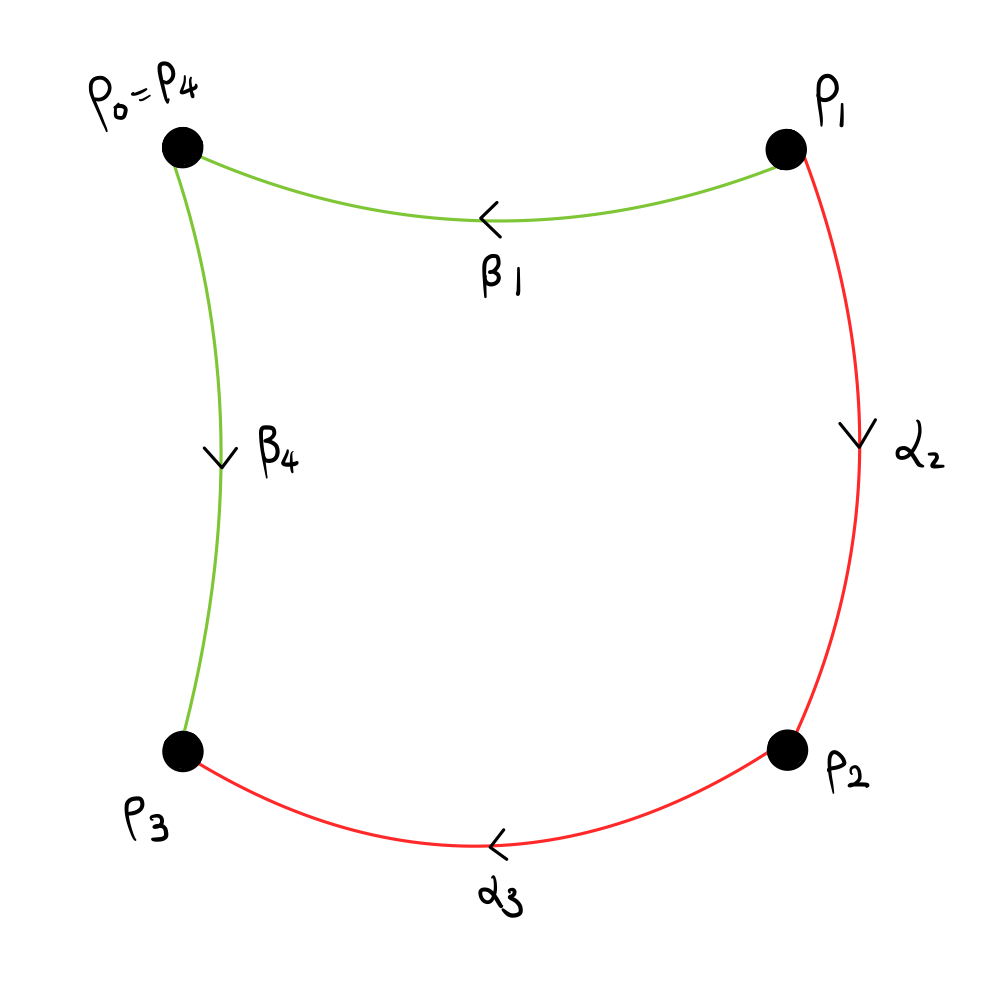}
    \caption{index-$0$ critical submanifold}
    \label{fig:11}
  \end{subfigure}
  
  \caption{$\Z_4$: $S^1$-fixed points / critical points}
  \label{fig:12}
\end{figure}

\subsection{Proof of Proposition \ref{prop}}

We now give the proof of Proposition \ref{prop}. We remark that Proposition \ref{prop} holds for a general $\Gamma$ which we do not assume to be cyclic. 

\begin{proof}[Proof of Proposition \ref{prop}]
We first prove the proposition for $\Theta$ an $S^1$-fixed point.  Recall that for $\Theta=(\alpha,\beta)$, and $\gamma=\begin{pmatrix}
u & v \\
-v^* & u^*  
\end{pmatrix},$ we have the following identities: 
\begin{align}
R(\gamma^{-1})\alpha R(\gamma) &= u\alpha + v\beta, \\
R(\gamma^{-1})\beta R(\gamma)  &= -v^*\alpha + u^*\beta. \label{eq:conj-beta}
\end{align}
It follows from Schur's lemma that $\alpha$, $\beta$ will decompose into edge maps along the McKay quiver of $\Gamma$. Except for when $\Gamma$  is the trivial group where the proposition holds trivially, the McKay quiver of $\Gamma$ contains no self loops at any vertices. As a result, $\alpha$, $\beta$ both have vanishing diagonal blocks as matrices in $End(R)$, where the blocks are given by the irreducible representations of $\Gamma$. Now, if $(\alpha,\beta)$ is an $S^1$-fixed point, there can be at most a single nonzero edge connecting any two vertices. Recall, the holonomy representation is given by $\rho_\Theta=\rho_{(\alpha,\beta)}: \Gamma \to U(R)$, $$\gamma \mapsto R(\gamma)\exp((v_1-(uv_1-\bar{v}v_2))\alpha+(v_2-(\bar{u}v_2+vv_1))\beta).$$ We now focus on the term $\exp((v_1-(uv_1-\bar{v}v_2))\alpha+(v_2-(\bar{u}v_2+vv_1))\beta)$ appearing in the expression. First, we note that a term of the form $\alpha^k\beta^\ell$ can be understood via paths along the directed edges.   Since there can be at most a single nonzero edge connecting any two vertices, by following any path defined by edge maps with the prescribed arrows, one can never come back to the same vertex. In particular, $\alpha$, $\beta$ must be nilpotent matrices, which is a well-known fact for $S^1$-fixed points. This observation implies that $\exp((v_1-(uv_1-\bar{v}v_2))\alpha+(v_2-(\bar{u}v_2+vv_1))\beta)$ also has vanishing diagonal blocks. Hence, $R(\gamma)\exp((v_1-(uv_1-\bar{v}v_2))\alpha+(v_2-(\bar{u}v_2+vv_1))\beta)$ must have the same diagonal blocks as $R(\gamma)$. This implies that $\rho_\Theta$ has the same character as the regular representation $R$ which means they are isomorphic.

For a generic point $\Theta\in X_\zeta$,  the downward gradient flow connecting $\Theta$ to an $S^1$-fixed point gives rise to a smooth path of holonomy representations converging to the regular representation $R$ at the $S^1$-fixed point. Since $\Gamma$ is a finite group, the character variety of $\Gamma$ is discrete, and hence, the smooth path of representations must be the constant path. This implies that $\rho_\Theta$ must be isomorphic to the regular representation for all points in $X_\zeta$. This concludes the proof of the proposition. 

\end{proof}

 \section{Irreducible representations as limits of gradient flow lines}

In this section, we develop a new dynamical framework for the McKay correspondence via gradient flow arising from the Morse function in the previous section. From Proposition \ref{prop}, we know that at every point $(\alpha,\beta)$ of $X_\zeta$, we get a holonomy representation $\rho_(\alpha,\beta)$ isomorphic to the regular representation. Hence, from an index-$2$  $S^1$-fixed point, along a gradient flow line to the boundary at infinity, we get a $1$-parameter family of representations $\rho(t)$ all isomorphic to the regular representation $R$. Hence, we can express each $\rho(t)$ as $$\rho(t)=P(t)RP(t)^{-1},$$ for a $1$-parameter family of change of basis matrices $P(t)$. We recall Conjecture \ref{conj} from Section 1:

\begin{conj}[c.f. Conjecture  \ref{conj}]
Let $\Gamma$ be a finite subgroup of $SU(2)$. Let $X_\zeta$ be as in Theorem \ref{thm}. Let $S_1$,...$S_r$ be the components of $S^1$-fixed points of $X_\zeta$ generating $H^2(X_\zeta)$, and let $\rho_j(t)=P_j(t)RP_j(t)^{-1}, 1\leq j\leq r$ be the $1$-parameter family of holonomy representations with the property that $$\rho_j(t)=\rho_{(\alpha_j(t),\beta_j(t))},$$ $$\lim_{t\to-\infty}(\alpha_j(t),\beta_j(t))=(\alpha_j,\beta_j),$$ with $(\alpha_j,\beta_j)\in S_j$. Then $$P_j^{\lim}=\lim_{t\to\infty}P_j(t)$$ is equal to the projector onto the nontrivial irreducible representation $\rho_j$ of $\Gamma$, and every nontrivial irreducible representation of $\Gamma$ arises this way. 
\end{conj}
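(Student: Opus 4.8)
The plan is to prove Conjecture \ref{conj} for $\Gamma=\Z_n$ by making every object in the statement fully explicit, exploiting that all irreducible representations are one-dimensional. First I would use the description of the $S^1$-fixed points from Section 5.1: on the affine $A_{n-1}$ quiver a fixed point is a configuration in which at each edge position $j$ at most one of the scalars $\alpha_j,\beta_j$ is nonzero, and the generators $S_1,\dots,S_r$ of $H^2(X_\zeta)$ are the isolated index-$2$ fixed points together with (for even $n$) the top class of the index-$0$ sphere. Because $R=\bigoplus_{k\in\Z_n}\rho_k$ with $\dim\rho_k=1$ and $F^c=GL_\C(R)^\Gamma$ is the diagonal torus $(\C^*)^n/\C^*$, the flow equations (5.5)--(5.8) decouple into scalar ODEs. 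The essential structural input is that the upward flow line through a fixed point traces the intersection of the single $\C^*$-orbit $z\cdot(\alpha_j,\beta_j)=(z\alpha_j,z\beta_j)$ with the level set $\{\mu_1=\zeta_1\}$; concretely $(\alpha_j(t),\beta_j(t))=e^{2t}\,g(t)(\alpha_j,\beta_j)g(t)^{-1}$ for a path $g(t)$ in the complexified gauge group $F^c$ determined, edge by edge, by equation (5.7). I would solve these scalar equations in closed form.

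Next I would compute the one-parameter family of holonomy representations. Since each fixed configuration is a union of oriented paths along the quiver with no returning loop, $\alpha_j(t)$ and $\beta_j(t)$ remain nilpotent, so the exponential in the holonomy formula
\[
\rho_j(t)(\gamma)=R(\gamma)\exp\bigl((v_1-(uv_1-\bar v v_2))\alpha_j(t)+(v_2-(\bar u v_2+v v_1))\beta_j(t)\bigr)
\]
truncates to a polynomial whose entries are explicit functions of $e^{2t}$ and $g(t)$. From this I would produce the intertwiner $P_j(t)$ realizing $\rho_j(t)=P_j(t)RP_j(t)^{-1}$, normalized by fixing the residual $F^c$-ambiguity (for instance, requiring $P_j(t)$ to be the transition matrix from the isotypic basis of $R$ to the eigenbasis simultaneously diagonalizing $\alpha_j(t),\beta_j(t)$, which is well defined once the flow leaves the fixed point and the pair becomes regular semisimple on the ALE end).

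The heart of the argument is the asymptotics as $t\to\infty$. Geometrically the flow escapes into the ALE end $\C^2/\Gamma$, where, after the gauge normalization, $(\alpha_j(t),\beta_j(t))$ converges to the multiplication operators of a free $\Gamma$-orbit degenerating onto a coordinate axis; the axis, and hence the dominant character, is dictated by the weight $w_j$ of the $S^1$-action at $S_j$. I would show that in this limit the competition between the $e^{2t}$ blow-up of $(\alpha_j,\beta_j)$ and the degeneration of $g(t)$ forces all columns of $P_j(t)$ to align along a single isotypic line, so that $P_j^{\lim}=\lim_{t\to\infty}P_j(t)$ exists and equals the rank-one idempotent projecting onto that line, i.e.\ the projector onto one nontrivial $\rho_j$. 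Finally, ranging $j$ over the generators, I would verify that the weights $w_j$ exhaust the nontrivial characters of $\Z_n$, giving the required bijection and matching it with the McKay labelling of the exceptional curves; the $\Z_2$ and $\Z_3$ computations of Section 6 furnish the base cases and already exhibit the two mechanisms (a single index-$0$ sphere, and genuinely distinct index-$2$ points).

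I expect the main obstacle to be the third step, namely proving that $P_j(t)$ actually converges and that the limit is an idempotent of the correct rank rather than merely a singular matrix. This hinges on controlling $g(t)$ as the flow reaches infinity --- equivalently, on understanding precisely how the complexified $\C^*$-orbit meets $\{\mu_1=\zeta_1\}$ near the ALE end --- and on quantifying the rank drop. The analogous analysis for non-cyclic $\Gamma$ is harder because the edge maps are no longer scalars and $F^c$ is nonabelian, so the flow does not decouple and the dominant-character argument must be replaced by a block/isotypic degeneration statement; this is the reason the present paper restricts to the cyclic case.
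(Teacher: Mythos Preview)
Your proposal contains a genuine error at the step where you claim that ``$\alpha_j(t)$ and $\beta_j(t)$ remain nilpotent, so the exponential in the holonomy formula truncates to a polynomial.'' This is false, and it is precisely the failure of nilpotency along the upward flow that produces the phenomenon the conjecture asserts.

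At an $S^1$-fixed point on the $A_{n-1}$ quiver one edge vanishes, so the matrix is indeed nilpotent there. But the upward gradient flow toward infinity \emph{fills in} the missing edge. In the paper's $\Z_3$ computation the fixed point $x_1=((a,\sqrt{a^2+b^2},0),(0,0,0))$ flows to $(\alpha(t),\beta(t))=((\sqrt{e^{c(t)}+a^2},\sqrt{e^{c(t)}+a^2+b^2},\sqrt{e^{c(t)}}),(0,0,0))$, so all three $\alpha$-entries are nonzero for finite $t$ and $N(t)^3$ is a nonzero scalar multiple of the identity. The same happens for $\Z_2$, where both $\alpha_1(t),\alpha_2(t)$ are nonzero and the exponential is expressed via $\cosh$ and $\sinh$, not a polynomial. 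Thus $\exp(A_1(t))$ is \emph{not} polynomial; it is the full series, organized into residue classes mod $n$ as
\[
\exp(A_1)=g_0(\eta)I+\tfrac{g_1(\eta)}{\eta}A_1+\cdots+\tfrac{g_{n-1}(\eta)}{\eta^{n-1}}A_1^{n-1},\qquad g_k(\eta)=\tfrac{1}{n}\sum_{\ell}\omega^{-k\ell}e^{\omega^\ell\eta}.
\]
The selection of a single irreducible representation in the limit comes from the competition among the exponentials $e^{\omega^\ell\eta}$: as $t\to\infty$ exactly one of them dominates, and after renormalization every entry of $P(t)$ aligns along the corresponding character vector, yielding the rank-one projector. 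Your truncated-polynomial picture would give matrices whose entries grow only polynomially and would not exhibit this dominance mechanism at all; in particular, you would have no way to see why different fixed points pick out different characters. The ``degeneration onto a coordinate axis'' heuristic you invoke does not capture this: for $\Z_3$ both $x_1$ and $x_2$ flow with one of $\alpha,\beta$ identically zero, yet they land on \emph{different} irreducibles, determined by the direction of the full cyclic shift that $\alpha$ or $\beta$ becomes, not by any further degeneration.
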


In the remaining of this section, we give computations verifying the conjecture for  low rank cyclic cases $\Z_2$, $\Z_3$. The method is indicative of the general verification for all $A_n$-type ALE spaces, which will be contained in a subsequent version of this paper.

\subsection{$\Z_2$:} We let $\rho_0$ denote the trivial representation of $\Z_2$, and $\rho_1$ the nontrivial irreducible representation. We let $\alpha=(\alpha_1,\alpha_2)$ where $\alpha_1$ is the edge map from $\rho_0$ to $\rho_1$ and $\alpha_2$ the edge map from $\rho_1$ to $\rho_0$, and $\alpha_1^*$ is the conjugate transpose of $\alpha_1$ from $\rho_1$ to $\rho_0$, and $\alpha_2^*$ the conjugate transpose of $\alpha_2$ from $\rho_1$ to $\rho_0$. Similarly, we write $\beta=(\beta_1,\beta_2)$ with $\beta_1$ the edge map from $\rho_1$ to $\rho_0$ and $\beta_2$ the edge map from $\rho_0$ to $\rho_1$, and $\beta_1^*$, $\beta_2^*$ are the respective conjugate transposes. Let $\zeta_1=(a^2,-a^2), a\in\R$ be fixed. 
\begin{center}
\includegraphics[width=0.4\textwidth]{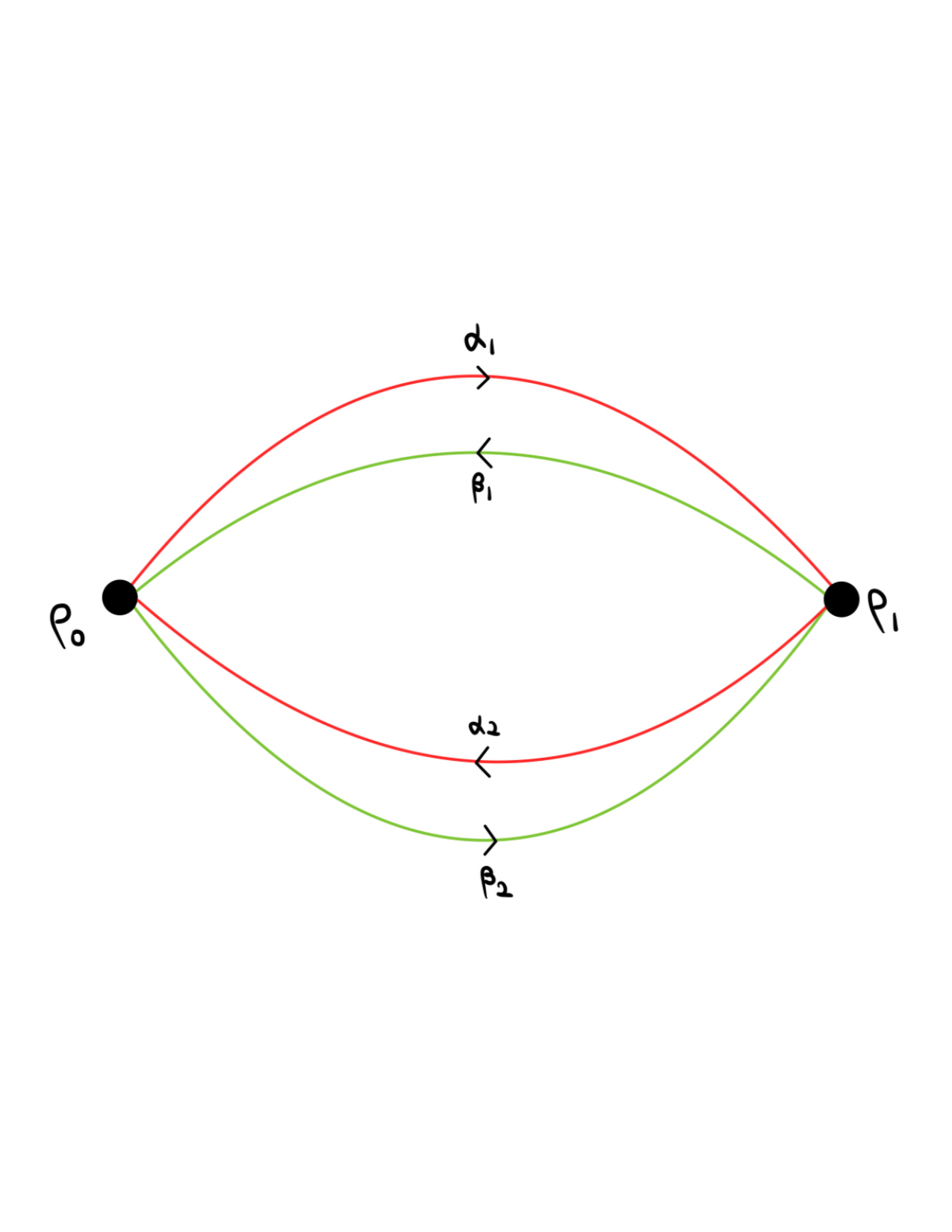}
\captionof{figure}{$(\alpha,\beta)=((\alpha_1,\alpha_2),(\beta_1,\beta_2))$}
\label{fig:13}
\end{center}

The equations \begin{equation} [\alpha,\alpha^*]+[\beta,\beta^*]=\zeta_1 \end{equation}
\begin{equation} [\alpha,\beta]=0 \end{equation} translate to the following equalities: 
\[
\begin{cases}
\alpha_1\beta_1 = \alpha_2\beta_2 \\
(\alpha_1\alpha_1^* -\alpha_2\alpha_2^*)+(\beta_2\beta_2^*-\beta_1\beta_1^*)=a^2.

\end{cases}
\]

There is the same abuse of notations as mentioned in the previous section.

At a critical point, $(\alpha,\beta)$ will be of the form $\alpha_1\alpha_1^*+\beta_2\beta_2^*=a^2$ and $\alpha_2=\beta_1=0$. Up to gauge equivalence, this gives us a single component of $S^1$-invariant critical submanifold diffeomorphic to a sphere. 
\begin{center}
\includegraphics[width=0.4\textwidth]{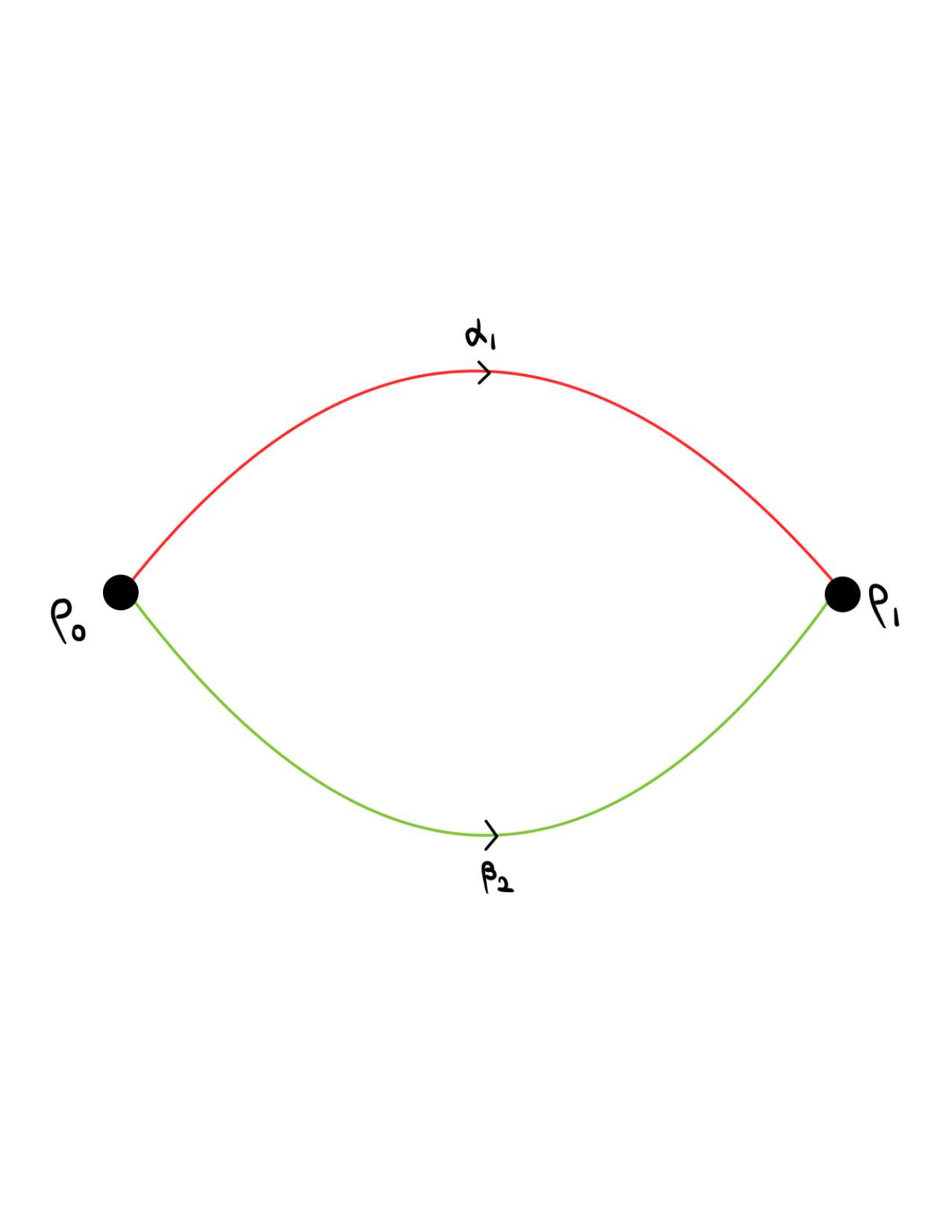}
\captionof{figure}{$S^1$-fixed points / critical points}
\label{fig:14}
\end{center}

Now, we pick any point $x$ on this sphere, and compute $1$-parameter family of the holonomy representations along the gradient flow line from the chosen point $x$ to the boundary at infinity, and study the limit of the given family of holonomy representations at infinity. Specifically, let $(\alpha(t),\beta(t))$ be the gradient flow line such that $(\alpha(-\infty),\beta(-\infty))=x$.  Since $(\alpha(t),\beta(t))$ is a gradient flow line into $x$, we have the following equations satisfied for all $t$:
\begin{equation}2\alpha(t)+[\xi(t),\alpha(t)]=\dot{\alpha}(t), \end{equation}
\begin{equation}2\beta(t)+[\xi(t),\beta(t)]= \dot{\beta}(t), \end{equation}
\begin{equation} [\alpha(t),\alpha(t)^*]+[\beta(t),\beta(t)^*]=\zeta_1, \end{equation}
\begin{equation} [\alpha(t),\beta(t)]=0,\end{equation}
\begin{equation}\lim_{t\to-\infty}(\alpha(t),\beta(t))=x.\end{equation}

Recall, we have $\rho_{\Theta}:\Gamma\to GL(R)$, $$\gamma\mapsto R(\gamma)\exp((v_1-(uv_1-\bar{v}v_2))\alpha+(v_2-(\bar{u}v_2+vv_1))\beta).$$ Any choice of point $x$ on the sphere of $S^1$-fixed points and any choice of generic base point $(v_1,v_2)$ will give rise to the same limiting representation as $t\to\infty$, which we will justify at the end of the subsection. Hence, without loss of generality, we let $$x=(\alpha,\beta)=((\alpha_1,\alpha_2),(\beta_1,\beta_2))=((a,0),(0,0)).$$

  For $\gamma=-1\in SU(2)$, $$\rho(-\infty)(-1)=\rho_\Theta(-1)=R(-1)\exp(2v_1\alpha)$$$$=\begin{pmatrix}
1 & 0 \\
0 & -1  
\end{pmatrix}\exp\begin{pmatrix}
0 & 0	 \\
2v_1a & 0  
\end{pmatrix}=\begin{pmatrix}
1 & 0 \\
0 & -1  
\end{pmatrix}\begin{pmatrix}
1 & 0 \\
2v_1a & 1  
\end{pmatrix}=\begin{pmatrix}
1 & 0 \\
2v_1a & -1  
\end{pmatrix}.$$

Now, consider the following $1$-parameter family $$(\alpha(t),\beta(t))=((\sqrt{e^{c(t)}+a^2},\sqrt{e^{c(t)}}),(0,0)).$$  We can always find a $1$-parameter family of elements $\xi(t)=(\xi_1(t),\xi_2(t))\in\f/\ttt$ and $c(t)$ with $c(-\infty)=-\infty$, $c(\infty)=\infty$, and $\dot{c}(t)>0$, such that $(\alpha(t),\beta(t))$ satisfies equations (6.3)--(6.7) and hence describes a gradient flow line from $x$ to the boundary at infinity. Again, we leave the calculation for $\xi(t)$ and $c(t)$ to the end of the subsection and assume the result for now.

Let $$A(t)=\begin{pmatrix}
0 & 2v_1\sqrt{e^{c(t)}}	 \\
2v_1\sqrt{e^{c(t)}+a^2}& 0  \end{pmatrix}=\begin{pmatrix}
0 & \sqrt{4v_1^2e^{c(t)}}	 \\
\sqrt{4v_1^2e^{c(t)}+4v_1^2a^2}& 0  \end{pmatrix}.
$$
Let $n=4v_1^2e^{c(t)}$ and $\hat{a}=4v_1^2a^2$, and write $$A(n)=\begin{pmatrix}
0 & \sqrt{n}	 \\
\sqrt{n+\hat{a}}& 0  \end{pmatrix}.$$ We have $$\exp(A(n))=\begin{pmatrix}
\cosh(\sqrt[4]{n(n+\hat{a})}) & -\sqrt[4]{\frac{n}{n+\hat{a}}}\sinh(\sqrt[4]{n(n+\hat{a})})	 \\
-\sqrt[4]{\frac{n+\hat{a}}{n}}\sinh(\sqrt[4]{n(n+\hat{a})})& \cosh(\sqrt[4]{n(n+\hat{a})}) \end{pmatrix}.$$ Let $B=R(-1)=\begin{pmatrix}
1 & 0 \\
0 & -1  
\end{pmatrix}$, and $$C(n)=B\exp(A(n))=\begin{pmatrix}
\cosh(\sqrt[4]{n(n+\hat{a})}) & \sqrt[4]{\frac{n}{n+\hat{a}}}\sinh(\sqrt[4]{n(n+\hat{a})}) \\
 -\sqrt[4]{\frac{n+\hat{a}}{n}}\sinh(\sqrt[4]{n(n+\hat{a})}) & -\cosh(\sqrt[4]{n(n+\hat{a})}) 
\end{pmatrix}.$$ It's straightforward to check that $C(n)$ is always conjugate to $B$, which agrees with the prediction that $\rho(t)$ is always isomorphic to the regular representation $R$. Let $P(n)$ be the change of basis matrix such that $P(n)BP(n)^{-1}=C(n)$. By using the following identities of $\cosh(\theta)$ and $\sinh(\theta)$: $$\cosh^2(\theta)-\sinh^2(\theta)=1,$$ $$\cosh(\theta)=\cosh^2(\frac{\theta}{2})+\sinh^2(\frac{\theta}{2}), $$$$\sinh(\theta)=2\cosh(\frac{\theta}{2})\sinh(\frac{\theta}{2}),$$  we see that $P(n)$ is given by $$P(n)= \begin{pmatrix}
\sqrt{k} \cosh\left(\frac{\sqrt[4]{n(n+\hat{a})}}{2}\right) & -\sqrt{k} \sinh\left(\frac{\sqrt[4]{n(n+\hat{a})}}{2}\right) \\
-\frac{1}{\sqrt{k}} \sinh\left(\frac{\sqrt[4]{n(n+\hat{a})}}{2}\right) & \frac{1}{\sqrt{k}} \cosh\left(\frac{\sqrt[4]{n(n+\hat{a})}}{2}\right)
\end{pmatrix},$$ where $k = \sqrt[4]{\frac{n}{n+\hat{a}}}$.

Now, we let $n$ go to infinity and take the limit of $C(n)$ and $P(n)$. After renormalization, we get $$C^{\lim}=\begin{pmatrix}
1 & 1 \\
-1 & -1  
\end{pmatrix},$$ and $$P^{\lim}=\begin{pmatrix}
1 & -1 \\
-1 & 1  
\end{pmatrix}.$$

We see that $P^{\lim}$ is the projector onto the nontrivial irreducible representation of $\Z_2$, as predicted by Conjecture \ref{conj}. 

\subsubsection{Gradient flow analysis}
Now we take more care with justifying the claim that the choice of a point $x$ in the sphere of $S^1$-fixed points doesn't affect the limit $P^{\lim}$, as well as explicitly finding $\xi(t)$ and $c(t)$. Without assuming $\beta_2=0$, a generic $S^1$-fixed point is of the form $$(\alpha,\beta)=((\alpha_1,\alpha_2),(\beta_1,\beta_2))=((r,0),(0,s)),$$ with $r^2+s^2=a^2$. We assume $r,s \neq 0$. For a gradient flow line $(\alpha(t),\beta(t))$ from $((r,0),(0,s))$ to the boundary at infinity, we must have $$(\alpha(t),\beta(t))=((\sqrt{e^{c_1(t)}+r^2},\sqrt{e^{c_1(t)}+e^{c_2(t)}-e^{c_3(t)}},(\sqrt{e^{c_3(t)}},\sqrt{e^{c_2(t)}+s^2})),$$ such that $(\alpha(t),\beta(t))$ solves (6.3)--(6.7) for all $t$. The equations translate to the following equalities: \[
\begin{cases}
 \dot{c}_1(t)= 2(1+r^2e^{-c_1(t)})(2+\xi_1(t)-\xi_2(t))

 \\\dot{c}_2(t)=2(1+s^2e^{-c_2(t)})(2+\xi_1(t)-\xi_2(t))
 \\ \dot{c}_3(t)=2(2+\xi_2(t)-\xi_1(t))
 \\ \dot{c}_1(t)e^{c_1(t)}+\dot{c}_2(t)e^{c_2(t)}-\dot{c}_3(t)e^{c_3(t)}=2(e^{c_1(t)}+e^{c_2(t)}-e^{c_3(t)})(2+\xi_2(t)-\xi_1(t))

\\(e^{c_1(t)}+r^2)e^{c_3(t)}=(e^{c_1(t)}+e^{c_2(t)}-e^{c_3(t)})(e^{c_2(t)}+s^2).
\end{cases}
\]

Let $u(t)=2+\xi_1(t)-\xi_2(t)$, and $2+\xi_2(t)-\xi_1(t)=4-u(t)$. Let $S(t) = e^{c_1(t)} + e^{c_2(t)}$. Then we have $$\dot{c}_1 e^{c_1} + \dot{c}_2 e^{c_2} - \dot{c}_3 e^{c_3} = 2(S - e^{c_3})(4 - u).$$ Further substituting, we get $$2u(S + a^2) - {2e^{c_3}(4 - u)} = 2S(4 - u) - {2e^{c_3}(4 - u)},$$ and hence, $$2u(S + a^2)  = 2S(4 - u) .$$ This gives us $$u(t) = \frac{4S(t)}{2S(t) + a^2}.$$ On the other hand, $$\dot{S}(t) = \dot{c}_1(t)e^{c_1(t)} + \dot{c}_2(t)e^{c_2(t)},$$ which gives us $$\dot{S}(t) = 2u(t)(e^{c_1(t)} + r^2) + 2u(t)(e^{c_2(t)} + s^2)=2u(t)(S(t)+a^2).$$ Substituting $u(t)$, we get an ODE involving $S(t)$: $$\dot{S}(t) = \frac{8S(t)(S(t) + a^2)}{2S(t) + a^2}.$$ Solving for $S(t)$, we get $$S(t)(S(t)+a^2) = C_0\cdot e^{8t},$$ and thus, $$S(t) = \frac{-a^2 + \sqrt{a^4 + 4C_0 e^{8t}}}{2}.$$ Now, we solve for $c_1(t)$, $c_2(t)$ and $c_3(t)$. Notice, we must have $$\frac{e^{c_1(t)} + r^2}{e^{c_2(t)} + s^2} = K,$$ for some constant $K$. Indeed, we have $$\begin{aligned}
\frac{d}{dt} (e^{c_1(t)} + r^2)  &= 2u(t) (e^{c_1(t)} + r^2 ),\\
\frac{d}{dt}(e^{c_2(t)} + s^2)&= 2u(t) (e^{c_2(t)} + s^2),
\end{aligned}$$
and hence, $$\frac{d}{dt} ( \frac{e^{c_1(t)} + r^2}{e^{c_2(t)} + s^2} ) = \frac{2u (e^{c_1} + r^2))(e^{c_2} + s^2) -  (e^{c_1} + r^2)(2u(e^{c_2} + s^2)}{(e^{c_2} + s^2)^2} = 0.$$
Combining everything, we get $$c_1(t) = \ln(\frac{K S(t) + (K s^2 - r^2)}{K + 1}),$$ $$c_2(t) = \ln(\frac{S(t) - (K s^2 - r^2)}{K + 1}),$$ $$c_3(t)= \ln(\frac{S(t)}{K + 1}).$$ To ensure that $c_1(t),c_2(t),c_3(t)$ are defined on $\R$ with the correct boundary conditions, we see that we must have $$K s^2 - r^2=0,$$ which implies $$K=\frac{r^2}{s^2}.$$ Hence, we must have $$c_1(t) = \ln(\frac{K S(t) }{K + 1}),$$ $$c_2(t)=c_3(t)= \ln(\frac{S(t)}{K + 1}).$$ 

In the cases of either $r=0$ or $s=0$, say $s=0$ and $r=a$, we have $c_2=c_3=0$, and  $$c_1(t)=c(t) = \ln ( \frac{-a^2 + \sqrt{a^4 + 4C_0 e^{8t}}}{2}).$$

In all the above cases, the boundary conditions for $t$ are satisfied, and $\xi_1(t), \xi_2(t)$ can always be chosen such that $\xi_1(t)+ \xi_2(t)=0$ to ensure $\xi(t)=(\xi_1(t),\xi_2(t))$ is a traceless element in $\f/\ttt$, for all $t$.

We further observe that regardless of the values of $r$ and $s$, after renormalization, the limiting holonomy representation is unaffected, that is $C^{\lim}$ and  $P^{\lim}$ remain unchanged.

\subsection{$\Z_3$:}
We now study the $\Z_3$ case. Again, we let $\rho_0$ denote the trivial representation of $\Z_2$, and $\rho_1$, $\rho_2$ the nontrivial irreducible representations with eigenvectors $\begin{pmatrix}
1 & \omega &
\omega^2 
\end{pmatrix}$ and $\begin{pmatrix}
1 & \omega^2 &
\omega 
\end{pmatrix}$, respectively. We let $\alpha=(\alpha_1,\alpha_2,\alpha_3)$, $\beta=(\beta_1,\beta_2,\beta_3)$, where $\alpha_j$, $\beta_j$ are again the edge maps, and $\alpha_j^*$, $\beta_j^*$ their conjugate transposes. Let $\zeta_1=(a^2,b^2,-a^2-b^2), a, b \in\R$, $|a|\neq|b|$,  be fixed. 

\begin{center}
\includegraphics[width=0.5\textwidth]{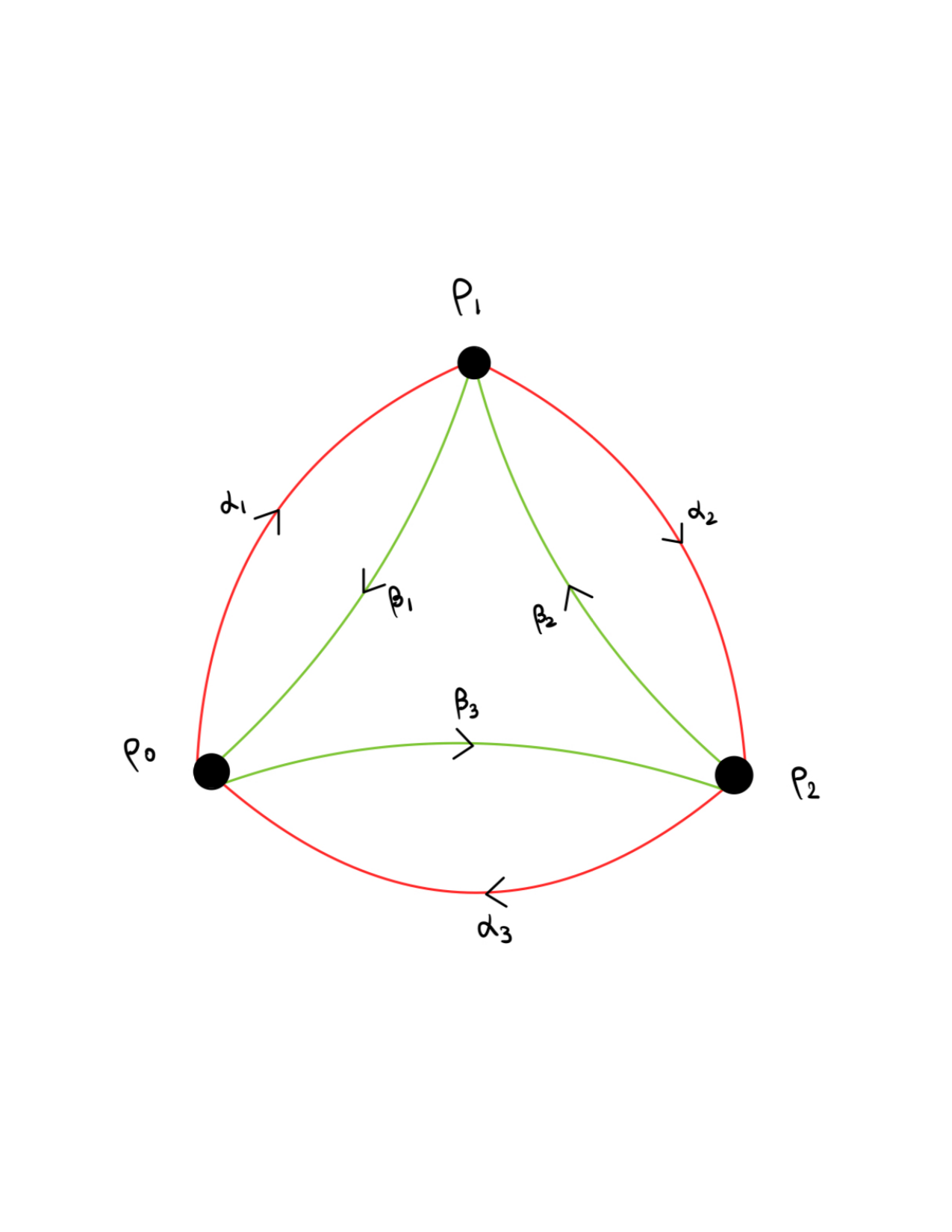}
\captionof{figure}{$(\alpha,\beta)=((\alpha_1,\alpha_2,\alpha_3),(\beta_1,\beta_2,\beta_3))$}
\label{fig:15}
\end{center}

The equations \begin{equation} [\alpha,\alpha^*]+[\beta,\beta^*]=\zeta_1 \end{equation}
\begin{equation} [\alpha,\beta]=0 \end{equation} translate to the following equalities: 
\[
\begin{cases}
\alpha_1\beta_1 = \alpha_2\beta_2= \alpha_3\beta_3 \\
\alpha_1\alpha_1^* -\beta_1\beta_1^*+\beta_3\beta_3^*-\alpha_3\alpha_3^*=a^2 \\
\alpha_2\alpha_2^* -\beta_2\beta_2^*+\beta_1\beta_1^*-\alpha_1\alpha_1^*=b^2.

\end{cases}
\]

Again,  there is the same abuse of notations in the above equations where $\alpha_j$, $\alpha_j^*$, $\beta_j$, $\beta_j^*$  refer to the numerical values of the respective maps with suppressed directions. 

There are three isolated critical points, see Figure 9, up to gauge equivalence, given by 

 \begin{enumerate} 
\item[$(x_1)$] \begin{center}  $\beta_1=\beta_2=\beta_3=\alpha_3=0$, $\alpha_1\alpha_1^*=a^2$, $\alpha_2\alpha_2^*=a^2+b^2,$ \end{center}
\item[$(x_2)$] \begin{center} $\alpha_1=\alpha_2=\alpha_3=\beta_2=0$, $\beta_1\beta_1^*=b^2$, $\beta_3\beta_3^*=a^2+b^2,$ \end{center}
\item[$(x_3)$] \begin{center}$\alpha_1=\beta_1=\beta_2=\alpha_3=0$, $\alpha_2\alpha_2^*=b^2$, $\beta_3\beta_3^*=a^2. $ \end{center}
\end{enumerate} 

\begin{figure}[htbp]
  \centering
  \begin{subfigure}{0.4\textwidth}
    \centering
    \includegraphics[width=\linewidth]{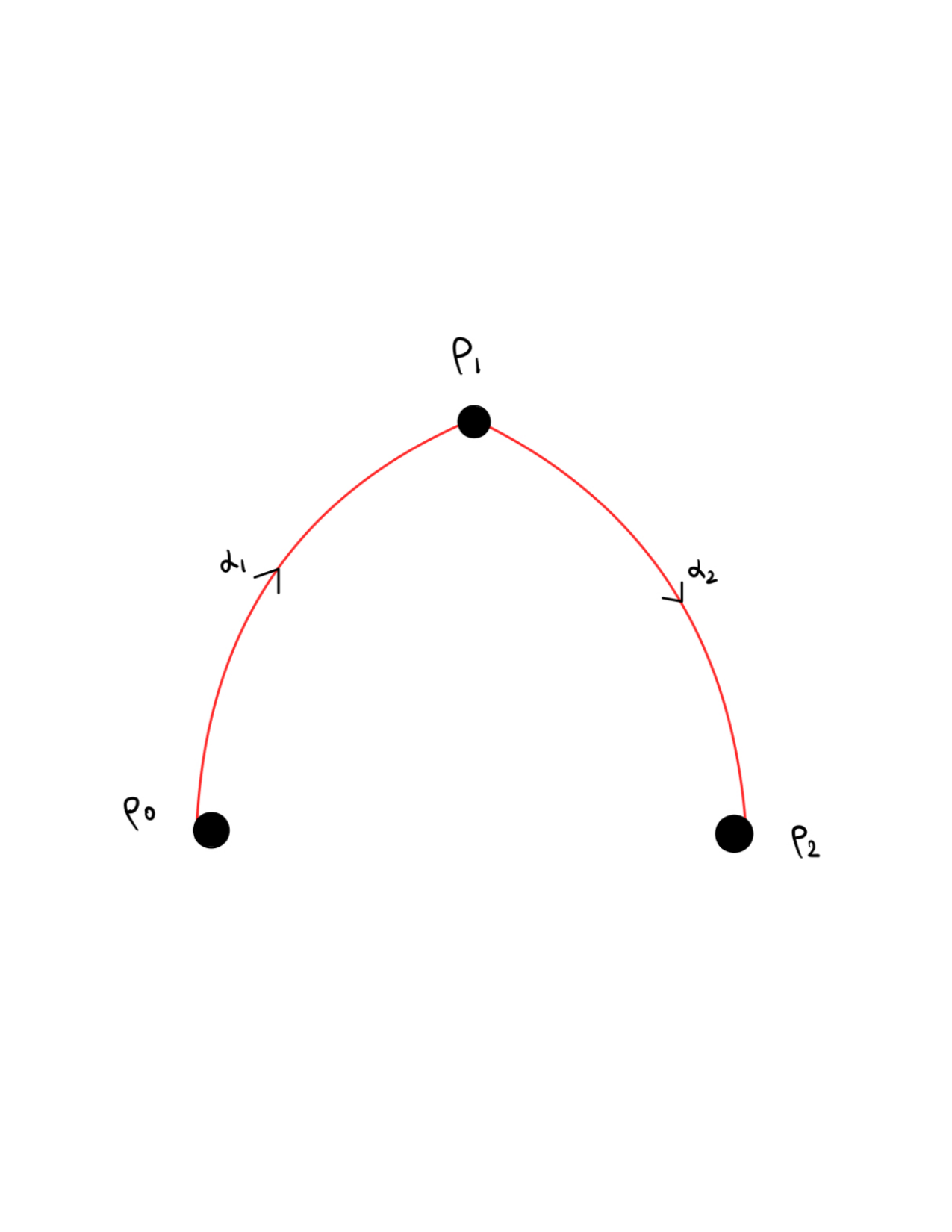}
    \caption{Critical point $x_1$}
    \label{fig:16}
  \end{subfigure}
  \begin{subfigure}{0.4\textwidth}
    \centering
    \includegraphics[width=\linewidth]{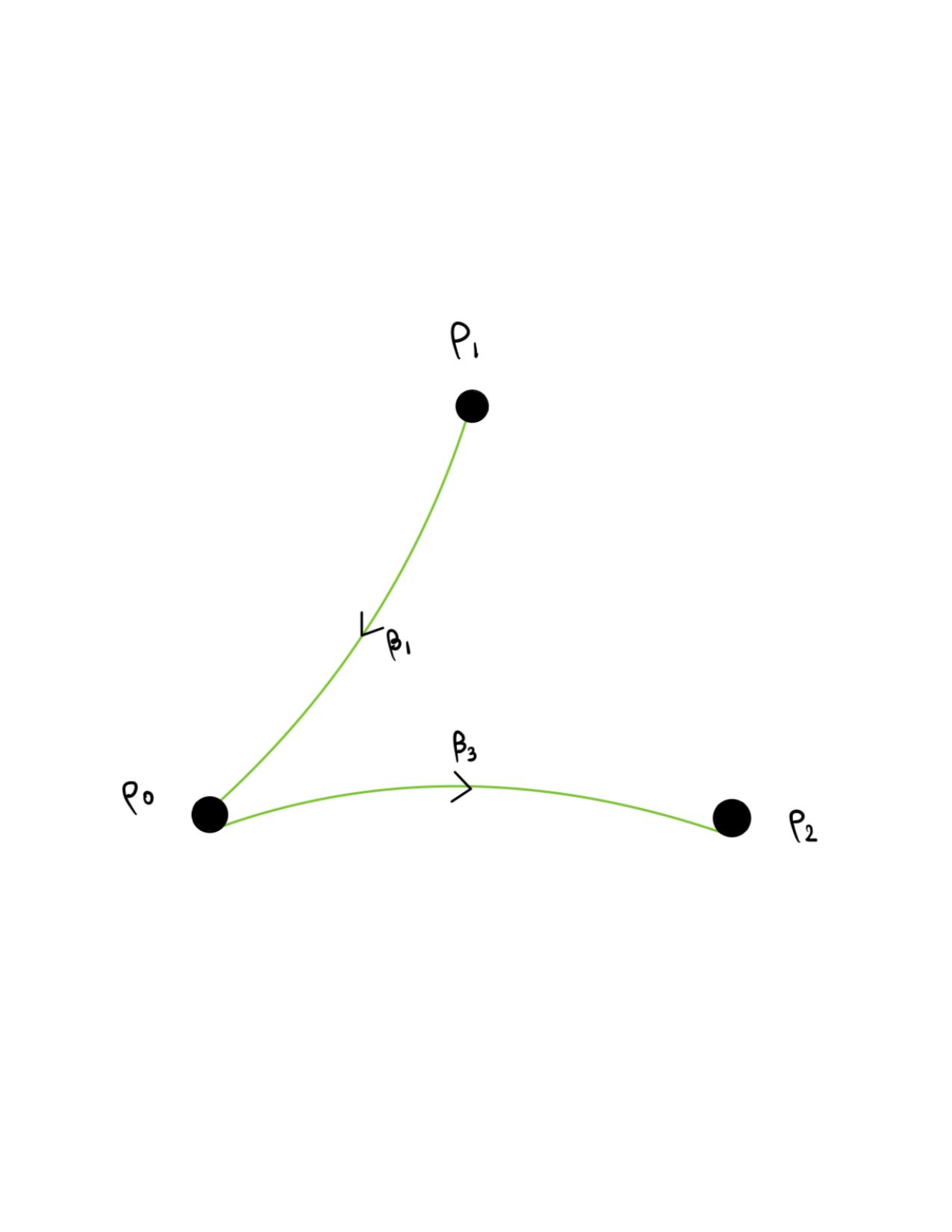}
    \caption{Critical point $x_2$}
    \label{fig:17}
  \end{subfigure}
   \begin{subfigure}{0.4\textwidth}
    \centering
    \includegraphics[width=\linewidth]{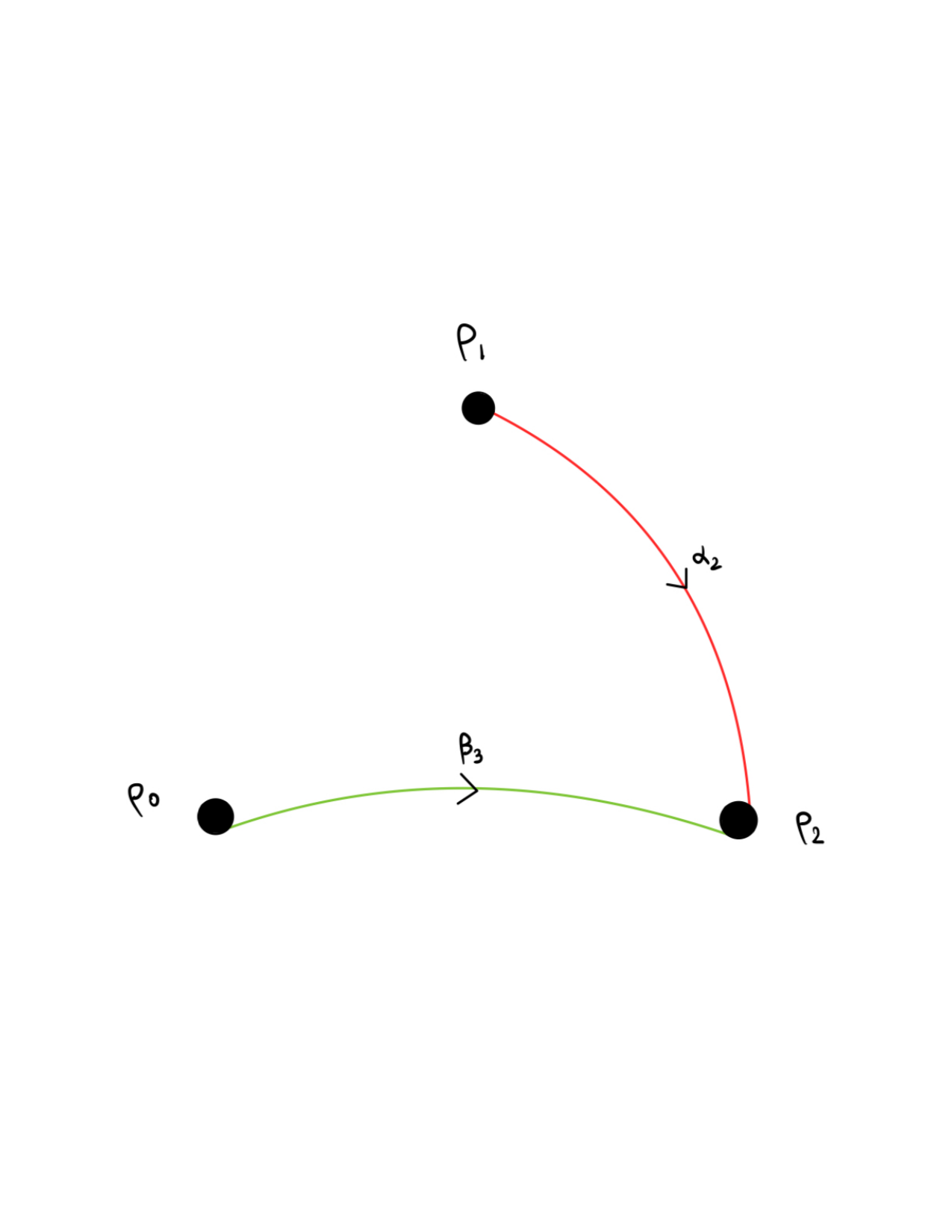}
    \caption{Critical point $x_3$}
    \label{fig:18}
  \end{subfigure}
  \caption{$S^1$-fixed points / critical points}
  \label{fig:19}
\end{figure}

In coordinates, we have: $$x_1=((a,\sqrt{a^2+b^2},0),(0,0,0)),$$
$$x_2=((0,0,0),(b,0,\sqrt{a^2+b^2})),$$ 

$$x_3=((0,b,0),(0,0,a)).$$ 

It's easy to see that  $x_3$ is a global minimal of the Morse function so it must be an index-$0$ critical point, which implies that $x_1$ and $x_2$ are index-$2$ critical points. We now analyze the gradient flow lines from $x_1$ and $x_2$ and calculate the limiting holonomy representations, respectively. 

\subsubsection{Case $x_1$: }
 As before, we start by writing down the gradient flow line $(\alpha(t),\beta(t))$ from $x_1$ to the boundary at infinity, that is, we have the following equations satisfied for all $t$:
\begin{equation}2\alpha(t)+[\xi(t),\alpha(t)]=\dot{\alpha}(t), \end{equation}
\begin{equation}2\beta(t)+[\xi(t),\beta(t)]= \dot{\beta}(t), \end{equation}
\begin{equation} [\alpha(t),\alpha(t)^*]+[\beta(t),\beta(t)^*]=\zeta_1, \end{equation}
\begin{equation} [\alpha(t),\beta(t)]=0,\end{equation}
\begin{equation}\lim_{t\to-\infty}(\alpha(t),\beta(t))=x_1.\end{equation}

Consider the following $1$-parameter family $$(\alpha(t),\beta(t))=((\sqrt{e^{c(t)}+a^2},\sqrt{e^{c(t)}+a^2+b^2},\sqrt{e^{c(t)}}),(0,0,0)).$$ We will show that we can always find $\xi(t)$, a $1$-parameter family of elements in $\f/\ttt$, and a real-valued function $c(t)$ with $c(-\infty)=-\infty $, $c(\infty)=\infty $ and $\dot{c}(t)>0$, such that $(\alpha(t),\beta(t))$ solves equations (6.10)--(6.14). For now, we assume the result, and hence, $(\alpha(t),\beta(t))$ defines a gradient flow line from $x_1$ to infinity.

Now, we calculate the  $1$-parameter family  of holonomy representations $\rho(t)$ along the gradient flow line given by $(\alpha(t),\beta(t))$. Again, $\rho(t):\Gamma\to GL(R)$, $$\gamma\mapsto R(\gamma)\exp((v_1-(uv_1-\bar{v}v_2))\alpha(t)+(v_2-(\bar{u}v_2+vv_1))\beta(t)).$$ We focus on $\gamma_1=\begin{pmatrix}
\omega&0	 \\
0& \omega^{-1}  \end{pmatrix}, $ where $\omega=e^{\frac{2\pi i}{3}}$. Let $N(t)$ be the matrix form of $\alpha(t)$, that is, $$N(t)=\begin{pmatrix}
0 &0	& v_1\sqrt{e^{c(t)}} \\
v_1\sqrt{e^{c(t)}+a^2}& 0 &0 \\0 & v_1\sqrt{e^{c(t)}+a^2+b^2} & 0 \end{pmatrix}. $$ We have 

$$N(t)^2=\begin{pmatrix}
0 &v_1^2\sqrt{e^{c(t)}(e^{c(t)}+a^2+b^2)}	& 0 \\
0& 0 & v_1^2\sqrt{e^{c(t)}(e^{c(t)}+a^2)} \\ v_1^2\sqrt{(e^{c(t)}+a^2)(e^{c(t)}+a^2+b^2)} & 0 & 0 \end{pmatrix},
$$
and $$N(t)^3=v_1^3\sqrt{e^{c(t)}(e^{c(t)}+a^2)(e^{c(t)}+a^2+b^2)}\begin{pmatrix}
1	& 0 &0\\
0& 1&0 \\0 & 0 & 1\end{pmatrix}.
$$

Let  $\eta(t)=(1-\omega)v_1\sqrt[6]{e^{c(t)}(e^{c(t)}+a^2)(e^{c(t)}+a^2+b^2)}$, and let $A_1(t)=(1-\omega)N(t)$. We have $A_1(t)^k=(1-\omega)^kN(t)^k$, and 
\begin{align}
\exp(A_1(t)) &=\sum_{k=0}^\infty\frac{A_1(t)^k}{k!} \\&=I + A_1(t) + \frac{A_1(t)^2}{2!}+ \frac{A_1(t)^3}{3!}+...\\   
 &= (\sum_{j=0}^\infty\frac{A_1(t)^{3j}}{(3j)!} ) + (\sum_{j=0}^\infty\frac{A_1(t)^{3j+1}}{(3j+1)!} )+ (\sum_{j=0}^\infty\frac{A_1(t)^{3j+2}}{(3j+2)!} )\\
    &= (\sum_{j=0}^\infty\frac{\eta(t)^{3j}}{(3j)!} )I+ (\sum_{j=0}^\infty\frac{\eta(t)^{3j+1}}{(3j+1)!} )\frac{A_1(t)}{\eta(t)}+(\sum_{j=0}^\infty\frac{\eta(t)^{3j+2}}{(3j+2)!} )\frac{A_1(t)^2}{\eta(t)^2}.
\end{align}

We now suppress $t$ to simplify the notations. Let $$g_0(\eta)=\sum_{j=0}^\infty\frac{\eta^{3j}}{(3j)!}=\frac{1}{3}(e^\eta+e^{\omega\eta}+e^{\omega^2\eta}),$$
$$g_1(\eta)=\sum_{j=0}^\infty\frac{\eta^{3j+1}}{(3j+1)!}=\frac{1}{3}(e^\eta+\omega^2 e^{\omega\eta}+\omega e^{\omega^2\eta}),$$
$$g_2(\eta)=\sum_{j=0}^\infty\frac{\eta^{3j+2}}{(3j+2)!}=\frac{1}{3}(e^\eta+\omega e^{\omega\eta}+\omega^2 e^{\omega^2\eta}),$$ and we can rewrite $$\exp(A_1)=g_0(\eta)I+\frac{g_1(\eta)}{\eta}A_1+\frac{g_2(\eta)}{\eta^2}A_1^2.$$ Let $c_0=g_0(\eta)$, $c_1=(1-\omega)\frac{g_1(\eta)}{\eta}$ and $c_2=(1-\omega)^2\frac{g_2(\eta)}{\eta^2}$, we have \begin{align}
\exp(A_1(t))&=c_0I+c_1N(t)+c_2N(t)^2\\ 
&=\begin{pmatrix}
c_0 &c_2v_1^2\sqrt{e^{c(t)}(e^{c(t)}+a^2+b^2)}	& c_1v_1\sqrt{e^{c(t)}} \\
c_1v_1\sqrt{e^{c(t)}+a^2}& c_0 &c_2v_1^2\sqrt{e^{c(t)}(e^{c(t)}+a^2)} \\c_2v_1^2\sqrt{(e^{c(t)}+a^2)(e^{c(t)}+a^2+b^2)} & c_1v_1\sqrt{e^{c(t)}+a^2+b^2} & c_0 \end{pmatrix}.
\end{align}

Now, let $$B_1=R(\gamma_1)=\begin{pmatrix}
1	& 0 &0\\
0& \omega&0 \\0 & 0 & \omega^2\end{pmatrix},$$ and \begin{align}C_1(t)&=B_1\exp(A_1(t))\\&=\begin{pmatrix}
c_0 &c_2v_1^2\sqrt{e^{c(t)}(e^{c(t)}+a^2+b^2)}	& c_1v_1\sqrt{e^{c(t)}} \\
\omega c_1v_1\sqrt{e^{c(t)}+a^2}& \omega c_0 &\omega c_2v_1^2\sqrt{e^{c(t)}(e^{c(t)}+a^2)} \\\omega^2 c_2v_1^2\sqrt{(e^{c(t)}+a^2)(e^{c(t)}+a^2+b^2)} & \omega^2 c_1v_1\sqrt{e^{c(t)}+a^2+b^2} & \omega^2c_0 \end{pmatrix}.\end{align}

Again, we now want to find the $1$-parameter family of change of basis matrices $P(t)$ such that $P(t)B_1P(t)^{-1}=C_1(t)$, for all $t$.   We observe that $B_1N=\omega NB_1,$ which implies \begin{equation}B_1A_1=\omega A_1B_1,\end{equation} and hence, we have  $$C_1(t)=\exp(\omega A_1(t))B_1.$$ Meanwhile, by (6.21), we also have $A_1B_1=\omega^2B_1A_1$, and by iterating this identity, we obtain $$A_1^kB_1=\omega^{2k}B_1A_1^k.$$ Let $P^{-1}(t)=\exp(\mu A_1(t)).$ Since $P^{-1}C_1P=P^{-1}B_1\exp(A_1)P=B_1$, we have \begin{equation}\exp(\mu A_1)B_1 \exp(A_1)\exp(-\mu A_1)=B_1.\end{equation} We expand the term $\exp(\mu A_1) B_1$ as follows:

$$\exp(\mu A_1) B_1 = \left( \sum_{n=0}^\infty \frac{(\mu A_1)^n}{n!} \right) B_1 = \sum_{n=0}^\infty \frac{\mu^n}{n!} (A_1^n B_1)$$ Since $A_1^kB_1=\omega^{2k}B_1A_1^k, $ we substitute this into the sum:$$\sum_{n=0}^\infty \frac{\mu^n}{n!} (\omega^{2n} B_1 A_1^n).$$ Since $B_1$ is a constant,  we get $$\exp(\mu A_1) B_1= B_1 \sum_{n=0}^\infty \frac{(\mu \omega^2)^n A_1^n}{n!} = B_1 \sum_{n=0}^\infty \frac{(\mu \omega^2 A_1)^n}{n!}=B_1 \exp(\mu\omega^2 A_1).$$ Hence, by combining with (6.20), we have $$B_1\exp(\mu\omega^2 A_1) \exp(A_1)\exp(-\mu A_1)=B_1, $$ which gives rise to the following equation: $$\mu\omega^2+1-\mu=0.$$

This gives us $\mu=\frac{1}{1-\omega^2}$, and hence, we have $$P=\exp((\omega^2-1)^{-1}A_1)=\exp(\omega N).$$ Let $\lambda(t)=v_1\sqrt[6]{e^{c(t)}(e^{c(t)}+a^2)(e^{c(t)}+a^2+b^2)}$, $$d_0=\frac{1}{3}(e^\lambda+e^{\omega\lambda}+e^{\omega^2\lambda}),$$ $$d_1=\frac{1}{3\lambda}(e^\lambda+\omega^2e^{\omega\lambda}+\omega e^{\omega^2\lambda}),$$ $$d_2=\frac{1}{3\lambda^2}(e^\lambda+\omega e^{\omega\lambda}+\omega^2e^{\omega^2\lambda}).$$ In matrix form, we have \begin{align}P(t)&=d_0 I+d_1\omega N(t)+d_2 \omega^2N(t)^2\\&=\begin{pmatrix}
d_0 &\omega^2d_2v_1^2\sqrt{e^{c(t)}(e^{c(t)}+a^2+b^2)}	& \omega d_1v_1\sqrt{e^{c(t)}} \\
\omega d_1v_1\sqrt{e^{c(t)}+a^2}& d_0 &\omega^2 d_2v_1^2\sqrt{e^{c(t)}(e^{c(t)}+a^2)} \\\omega^2 d_2v_1^2\sqrt{(e^{c(t)}+a^2)(e^{c(t)}+a^2+b^2)} & \omega d_1v_1\sqrt{e^{c(t)}+a^2+b^2} & d_0 \end{pmatrix}.\end{align}

Finally, we analyze the limit of $C_1(t)$ and $P(t)$ as $t$ goes to infinity. We see that in all $c_0,c_1,c_2$ and $d_0,d_1,d_2$, the leading term as $t$ goes to infinity is $e^{\lambda(t)}$, and hence, after renormalization, we have $$C_1^{\lim}=\begin{pmatrix}
1 &1 &1 \\
\omega & \omega &\omega \\\omega^2& \omega^2  & \omega^2 \end{pmatrix},$$ and $$P^{\lim}=\begin{pmatrix}
1 &\omega^2	& \omega \\
\omega & 1&\omega^2 \\\omega^2& \omega  & 1 \end{pmatrix}.$$

For $\gamma_2=\begin{pmatrix}
\omega^{-1}&0	 \\
0& \omega  \end{pmatrix}$, the calculation is similar. Let $A_2(t)=(1-\omega^2)N(t)$, and $$B_2=R(\gamma_2)=\begin{pmatrix}
1	& 0 &0\\
0& \omega^2&0 \\0 & 0 & \omega\end{pmatrix}.$$ Analogously, we have $C_2(t)=B_2\exp(A_2(t)),$ and $P(t)B_2P(t)^{-1}=C_2(t).$ As $t$ goes to infinity, after renormalization, we have $$C_2^{\lim}=\begin{pmatrix}
1 &1 &1 \\
\omega^2 & \omega^2 &\omega^2 \\\omega& \omega  & \omega \end{pmatrix},$$ and the same $$P^{\lim}=\begin{pmatrix}
1 &\omega^2	& \omega \\
\omega & 1&\omega^2 \\\omega^2& \omega  & 1 \end{pmatrix}.$$

We see that $P^{\lim}$ is the projector onto the nontrivial irreducible representation $ \begin{pmatrix}
1	& \omega  &\omega^2 \end{pmatrix}^T$ of $\Z_3$, agreeing with Conjecture \ref{conj}. 

We now go back to solving equations (6.10)--(6.14) explicitly. We need to find $c(t)$ and $\xi(t)=(\xi_1(t),\xi_2(t),\xi_3(t))$ with $$\xi_1(t)+\xi_2(t)+\xi_3(t)=0,$$ $$\lim_{t\to-\infty}c(t)=-\infty, $$$$\lim_{t\to\infty}c(t)=\infty, $$ and $$\dot{c}(t)>0,$$ for all $t$. By the construction of $(\alpha(t),\beta(t))$, (6.12)--(6.14) are always satisfied. Hence, we only need to solve for (6.10)--(6.11). Equations (6.10)--(6.11) translate into the following equalities: \[
\begin{cases}
\dot{c}(t) = 2(2+\xi_3(t)-\xi_1(t))\\
\dot{c}(t) = 2(1+a^2e^{-c(t)})(2+\xi_1(t)-\xi_2(t)) \\
\dot{c}(t) = 2(1+(a^2+b^2)e^{-c(t)})(2+\xi_2(t)-\xi_3(t)).

\end{cases}
\]
Hence, we have \begin{equation}\frac{\dot{c}(t) }{2}=2+\xi_3(t)-\xi_1(t),\end{equation} \begin{equation}\frac{\dot{c}(t) }{ 2(1+a^2e^{-c(t)})}=2+\xi_1(t)-\xi_2(t),\end{equation} \begin{equation}\frac{\dot{c}(t)}{2(1+(a^2+b^2)e^{-c(t)})}=2+\xi_2(t)-\xi_3(t).\end{equation}
Adding (6.25)--(6.27) together, we get $$\frac{\dot{c}(t) }{2}+\frac{\dot{c}(t) }{ 2(1+a^2e^{-c(t)})}+\frac{\dot{c}(t)}{2(1+(a^2+b^2)e^{-c(t)})}=6.$$ Hence, we have $$\dot{c}(t)=\frac{12}{1+\frac{1}{1+a^2e^{-c(t)}}+\frac{1}{1+(a^2+b^2)e^{-c(t)}}}.$$ We solve for $c(t)$ implicitly as follows: $$12t+C_0=c(t)+\ln(e^{c(t)}+a^2)+\ln(e^{c(t)}+a^2+b^2).$$ We see that the boundary conditions are satisfied, $\dot{c}(t)>0$ for all $t$, and we can always choose $\xi_1(t)+\xi_2(t)+\xi_3(t)=0$ so that $\xi(t)$ is a traceless matrix in $\f/\ttt$. This concludes the case analysis for $x_1$.

\subsubsection{Case $x_2$: } The case analysis for $x_2$ is very similar to that of $x_1$. We follow the same steps by first  writing down the gradient flow line $(\alpha(t),\beta(t))$ from $x_2$ to the boundary at infinity. In this case, we have the $1$-parameter family  $$(\alpha(t),\beta(t))=((0,0,0), (\sqrt{e^{c(t)}+b^2},\sqrt{e^{c(t)}}, \sqrt{e^{c(t)}+a^2+b^2})).$$ We skip the explicit calculation for $c(t)$ here since it is identical to the previous case. 

Again,  we compute the holonomy representation, $\rho(t):\Gamma\to GL(R)$, $$\gamma\mapsto R(\gamma)\exp((v_1-(uv_1-\bar{v}v_2))\alpha(t)+(v_2-(\bar{u}v_2+vv_1))\beta(t)),$$ along this gradient flow line by focusing on $\gamma_1=\begin{pmatrix}
\omega&0	 \\
0& \omega^{-1}  \end{pmatrix}$ first.

Let $M(t)$ be the matrix form of $\beta(t)$, that is, $$M(t)=\begin{pmatrix}
0 &v_2\sqrt{e^{c(t)}+b^2}	& 0 \\
0& 0 &v_2\sqrt{e^{c(t)}} \\v_2\sqrt{e^{c(t)}+a^2+b^2} & 0 & 0 \end{pmatrix}. $$ We have 

$$M(t)^2=\begin{pmatrix}
0 &0	& v_2^2 \sqrt{e^{c(t)}(e^{c(t)}+b^2)} \\
v_2^2\sqrt{e^{c(t)}(e^{c(t)}+a^2+b^2)}& 0 & 0\\ 0 & v_2^2\sqrt{(e^{c(t)}+b^2)(e^{c(t)}+a^2+b^2)} & 0 \end{pmatrix},
$$
and $$M(t)^3=v_2^3\sqrt{e^{c(t)}(e^{c(t)}+a^2)(e^{c(t)}+a^2+b^2)}\begin{pmatrix}
1	& 0 &0\\
0& 1&0 \\0 & 0 & 1\end{pmatrix}.
$$

Let $D_1(t)=(1-\omega)M(t)$. We have $D_1(t)^k=(1-\omega)^kM(t)^k$. By the same computation as before, we have \begin{align}\exp(D_1(t))&=c_0I+c_1M(t)+c_2M(t)^2\\ 
&=\begin{pmatrix}
c_0 &c_1v_2\sqrt{e^{c(t)}+b^2}	& c_2v_2^2\sqrt{e^{c(t)}(e^{c(t)}+b^2)} \\
c_2v_2^2\sqrt{e^{c(t)}(e^{c(t)}+a^2+b^2)}& c_0 &c_1v_2\sqrt{e^{c(t)}} \\c_1v_2\sqrt{e^{c(t)}+a^2+b^2} & c_2v_2^2\sqrt{(e^{c(t)}+b^2)(e^{c(t)}+a^2+b^2)} & c_0 \end{pmatrix}.
\end{align}

Now again, let $$B_1=R(\gamma_1)=\begin{pmatrix}
1	& 0 &0\\
0& \omega&0 \\0 & 0 & \omega^2\end{pmatrix},$$ and \begin{align}F_1(t)&=B_1\exp(D_1(t))\\&=\begin{pmatrix}
c_0 &c_1v_2\sqrt{e^{c(t)}+b^2}	& c_2v_2^2\sqrt{e^{c(t)}(e^{c(t)}+b^2)} \\
\omega c_2v_2^2\sqrt{e^{c(t)}(e^{c(t)}+a^2+b^2)}& \omega c_0 &\omega c_1v_2\sqrt{e^{c(t)}} \\\omega^2c_1v_2\sqrt{e^{c(t)}+a^2+b^2} & \omega^2c_2v_2^2\sqrt{(e^{c(t)}+b^2)(e^{c(t)}+a^2+b^2)} & \omega^2c_0 \end{pmatrix}.\end{align}

Now, we find the $1$-parameter family of change of basis matrices $W(t)$ such that $W(t)B_1W(t)^{-1}=F_1(t)$, for all $t$. Following the same steps, we get $$W(t)=\exp(\omega M(t))=d_0I+d_1\omega M(t)+d_2\omega^2M(t)^2,$$ which, in matrix form, is as follows: $$W(t)=\begin{pmatrix}
d_0 &\omega d_1v_2\sqrt{e^{c(t)}+b^2}	& \omega^2d_2v_2^2\sqrt{e^{c(t)}(e^{c(t)}+b^2)} \\
\omega^2d_2v_2^2\sqrt{e^{c(t)}(e^{c(t)}+a^2+b^2)}& d_0 &\omega d_1v_2\sqrt{e^{c(t)}} \\\omega d_1v_2\sqrt{e^{c(t)}+a^2+b^2} & \omega^2 d_2v_2^2\sqrt{(e^{c(t)}+b^2)(e^{c(t)}+a^2+b^2)} & d_0 \end{pmatrix}.$$

By taking the limit as $t$ goes to infinity, after renormalization, we get that $$F_1^{\lim}= \begin{pmatrix}
1 &1 &1 \\
\omega & \omega &\omega \\\omega^2& \omega^2  & \omega^2 \end{pmatrix},$$ and $$W^{\lim}=\begin{pmatrix}
1 &\omega	& \omega^2 \\
\omega^2 & 1&\omega \\\omega& \omega^2  & 1 \end{pmatrix}.
$$

As for  $\gamma_2=\begin{pmatrix}
\omega^{-1}&0	 \\
0& \omega  \end{pmatrix}$, we have $D_2(t)=(1-\omega^2)M(t)$, and $$B_2=R(\gamma_2)=\begin{pmatrix}
1	& 0 &0\\
0& \omega^2&0 \\0 & 0 & \omega\end{pmatrix}.$$ Let $F_2(t)=B_2\exp(D_2(t)),$ and $W(t)B_2W(t)^{-1}=F_2(t).$ As $t$ goes to infinity, after renormalization, we have $$F_2^{\lim}=\begin{pmatrix}
1 &1 &1 \\
\omega^2 & \omega^2 &\omega^2 \\\omega& \omega  & \omega \end{pmatrix},$$ and the same $$W^{\lim}=\begin{pmatrix}
1 &\omega	& \omega^2 \\
\omega^2 & 1&\omega \\\omega& \omega^2  & 1 \end{pmatrix}.$$

We see that $W^{\lim}$ is the projector onto the other nontrivial irreducible representation,  $ \begin{pmatrix}
1	& \omega^2  &\omega \end{pmatrix}^T$, of $\Z_3$, again predicted by Conjecture \ref{conj}.

\end{document}